\theoremstyle{plain}
\newtheorem{theorem}{Theorem}[section]
\newtheorem{lemma}[theorem]{Lemma}
\newtheorem{prop}[theorem]{Proposition}
\newtheorem{cor}[theorem]{Corollary}
\newtheorem{conjecture}[theorem]{Conjecture}
\theoremstyle{definition}
\newtheorem{definition}[theorem]{Definition}
\newcommand\SL{\mathrm{SL}}
\newcommand\PSL{\mathrm{PSL}}
\newcommand\SP{\mathrm{SP}}
\newcommand\SO{\mathrm{SO}}
\newcommand\SU{\mathrm{SU}}
\newcommand\R{\mathbf{R}}
\newcommand\CC{\mathbf{C}}
\newcommand\cC{\mathcal{C}}
\newcommand\cE{\mathcal{E}}
\newcommand\cG{\mathcal{G}}
\newcommand\F{\mathbf{F}}
\newcommand\free{\mathbb{F}}
\newcommand\Q{\mathbf{Q}}
\newcommand\N{\mathbf{N}}
\newcommand\Z{\mathbf{Z}}
\newcommand\cH{\mathcal{H}}
\newcommand\LL{\mathcal{L}}
\numberwithin{equation}{section}
\begin{document}


\title{Analysis on simple Lie groups and lattices}

\author[M.~De~La~Salle]{Mikael De La Salle}
\address{CNRS, Université de Lyon, France}
\thanks{MdlS was funded by the ANR grants AGIRA ANR-16-CE40-0022 and Noncommutative analysis on groups and quantum groups ANR-19-CE40-0002-01}


\begin{abstract}
We present a simple tool to perform analysis with groups such as $\SL_n(\R)$ and $\SL_n(\Z)$, that has been introduced by Vincent Lafforgue in his study of non-unitary representations, in connection with the Baum-Connes conjecture  and strong property (T). It has been later applied in various contexts: operator algebras, Fourier analysis, geometry of Banach spaces or dynamics. The idea is to first restrict to compact subgroups and then exploit how they sit inside the whole group.
\end{abstract}

\maketitle


This text is devoted to the presentation of a single idea that has
been useful to answer several analysis questions on higher rank simple
Lie groups and lattices. This idea originates from Vincent Lafforgue's
work \cite{MR2423763}, and can be summarized as \emph{rank $0$
  reduction}.

One of the basic tools to study Lie algebras is that of
$\mathfrak{sl}_2$ triples. In the context a semisimple Lie groups,
it is often used in the following form: to understand a possibly
complicated Lie groups, one restricts to its subgroups locally
isomorphic to $\SL_2(\R)$ (and there are plenty by the Jacobson-Morozov theorem), the
simplest non compact semisimple Lie group. This idea, that we could
call \emph{rank $1$ reduction} because $\SL_2(\R)$ has rank $1$, can
be powerful. For example, it allows to obtain precise information on
the unitary representations of higher rank simple Lie groups
\cite{MR1151617,MR1905394}. But, as we shall see later in this text,
there are situations where rank $1$ reduction is not
efficient. Rank $2$ reduction has also become a standard tool in the study of higher rank simple Lie groups, as every real simple Lie group of rank $\geq 2$ contains a subgroup locally isomorphic to one of the rank $2$ groups $\SL_3(\R)$ of $\SP_2(\R)$. See for example \cite[I.1.6]{MR2415834}.

Rank $0$ reduction, the subject of this survey, consists in studying a Lie group through its compact subgroups. The idea is to first restrict to the compact subgroups of the Lie group $G$ and do analysis there. It is perhaps surprising that there are nontrivial general things to say (see Proposition~\ref{prop:Uinvariant_K_coeff}). And then, in a second step, by analyzing the relative positions of the various cosets of compact groups in $G$, it is possible to promote the local phenomena that have been discovered in the first step to global phenomena in $G$.

In the whole text except for the brief and last
section~\ref{sec:otherGroups}, we only consider for simplicity the Lie
group $\SL_3(\R)$ and its subgroup $\SL_3(\Z)$. In the first section,
we illustrate rank $0$ reduction in the simplest meaningful setting of
unitary representations of $\SL_3(\R)$~: we shall see that this idea
provides a new proof of Kazhdan's celebrated theorem that $\SL_3(\R)$
has property (T). In the next sections we show several other
applications of this idea for $\SL_3(\R)$ or $\SL_3(\Z)$, each time
with a brief history of the problems. In Section~\ref{sec:fourier},
devoted to Fourier analysis for non-commutative groups, we explain how
this same idea can show that Fourier synthesis (the reconstruction of
a ``function'' from its Fourier series) is in a way impossible for
$\SL_3(\R)$ and $\SL_3(\Z)$. We then interpret these Fourier analysis
statements in terms of approximation properties of the von Neumann
algebra of $\SL_3(\Z)$ and its noncommutative $L_p$ spaces. In
Section~\ref{sec:strongT} is devoted to strong property (T)~: we study
non unitary representations of $\SL_3(\R)$ and $\SL_3(\Z)$ on Hilbert
spaces and see that the same idea allows to prove some form of
property (T) for them. Applications of strong property (T) are also
described, in particular we explain how strong property (T) appears as
a key tool in the resolution by Brown, Fisher and Hurtado of
Zimmer's conjecture for group actions of high rank lattices on
manifolds of small dimension. Section~\ref{sec:BanachSpaces} is
devoted to group actions on Banach spaces~: we investigate how much of
strong property (T) can be proved for representations on more general
Banach spaces. Finally in Section~\ref{sec:otherGroups} we survey how these ideas have been used for other semisimple Lie groups or algebraic groups over other local fields.

\section{A proof of property (T) for $\SL_3(\R)$}\label{sec:SL3}
Throughout this text, by \emph{representation} of a locally compact group $G$ on a Banach space $X$, we will always mean a group homomorphism $\pi$ from $G$ to the group of invertible continuous linear maps on $X$ that is continuous for the strong operator topology: for every $\xi \in X$, $g \mapsto \pi(g) \xi$ is continuous. A unitary representation is when $\pi$ takes values in the unitary group of a Hilbert space.

A topological group $G$ has Kazhdan's property (T) whenever the trivial representation is isolated in its unitary dual for the Fell topology. This means that every unitary representation $\pi$ of $G$ on a Hilbert space $\cH$ which almost has invariant vectors (\emph{i.e.} there is a net $\xi_i$ of unit vectors in $\cH$ such that $\lim_i\|\pi(g) \xi_i - \xi_i\|=0$ uniformly on compact subsets of $G$), has a nonzero invariant vector. Because of its numerous applications, property (T) has become a central concept in many areas of mathematics such as geometric and analytic group theory, operator algebras, ergodic theory, etc. See \cite{MR2415834}.

The purpose of this introductory section is to give a detailed proof of Kazhdan's celebrated theorem \cite{MR0209390} that the group $\SL_3(\R)$ has property (T). We do not give one of the classical proofs (which rely in a way or another on the pair $\SL_2 \subset \SL_3$) but a proof due to Lafforgue \cite{MR2423763} (which relies on the pair $\SO(3) \subset \SL_3(\R)$). We denote $G=\SL_3(\R)$, $K = \SO(3) \subset G$ the maximal compact subgroup. Then the polar decomposition asserts that every element of $G$ can be written as a product $g=k a k'$ for $k,k' \in K$ and a diagonal matrix $a$ with positive entries in non-increasing order. In other word, it identifies the double classes $K \backslash G/K$ with the \emph{Weyl chamber} $\Lambda =\{ (r,s,t) \in \R^3, r\geq s \geq t, r+s+t=0\}$ via the identification of $(r,s,t)$ with the class $KD(r,s,t) K$ of 
\[ D(r,s,t) = \begin{pmatrix} e^r&0&0\\ 0&e^s&0\\0&0&e^t\end{pmatrix}.\]
We also introduce the subgroup $U \simeq \SO(2) \subset K$ of block-diagonal matrices
\[ U = \left\{ \begin{pmatrix} 1 & 0 &0\\0&*&*\\0&*&*\end{pmatrix} \right\} \cap K.\]
The double classes $U \backslash K/U$ are then parametrized by $[-1,1]$, the parametrization being given by $UkU \mapsto k_{1,1}$. For every $\delta \in [-1,1]$, let $k_\delta$ denote a representative of the corresponding double class, for example
\[ k_\delta = \begin{pmatrix} \delta & -\sqrt{1-\delta^2} & 0 \\ \sqrt{1-\delta^2} & \delta& 0\\0&0&1 \end{pmatrix}.\]
\paragraph{Step 1.} The first step is the observation that $U$-biinvariant matrix coefficients of unitary representations of $K$ are H\"older $\frac 1 2$-continuous on $(-1,1)$. In particular
\begin{prop}\label{prop:Uinvariant_K_coeff} For every unitary representation $\pi$ of $K$ on a Hilbert space $\mathcal H$ and every $\pi(U)$-invariant unit vectors $\xi,\eta \in \mathcal H$ we have
\[ | \langle \pi(k_\delta) \xi,\eta\rangle - \langle \pi(k_0) \xi,\eta\rangle| \leq 2 \sqrt{|\delta|}.\]
\end{prop}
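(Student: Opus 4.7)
The plan is to reduce, via Peter--Weyl, to a uniform inequality on Legendre polynomials and then prove that inequality. First I would decompose $\cH \cong \bigoplus_{\ell \geq 0} V_\ell \otimes M_\ell$ into $K$-isotypic components, where $V_\ell$ is the irreducible $(2\ell+1)$-dimensional representation of $K = \SO(3)$ and $M_\ell$ is the multiplicity space. Since $(K,U)$ is a Gelfand pair with $K/U = S^2$, each $V_\ell$ carries a one-dimensional space of $U$-invariants, spanned by a unit vector $e_\ell$, so $\cH^U \cong \bigoplus_\ell M_\ell$. The standard identification of spherical functions on the $2$-sphere gives $\langle \pi_\ell(k) e_\ell, e_\ell\rangle = P_\ell(k_{1,1})$, where $P_\ell$ is the $\ell$-th Legendre polynomial. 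Equivalently, writing $P$ for the orthogonal projection onto $\cH^U$, the operator $P\pi(k_\delta)P$ restricted to $\cH^U$ acts on $M_\ell$ as the scalar $P_\ell(\delta)$.

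Writing $\xi = \sum_\ell x_\ell$ and $\eta = \sum_\ell y_\ell$ with $x_\ell, y_\ell \in M_\ell$, and using $P\xi = \xi$ and $P\eta = \eta$, one has
\[
\langle \pi(k_\delta)\xi, \eta\rangle - \langle \pi(k_0)\xi, \eta\rangle = \sum_\ell \bigl(P_\ell(\delta) - P_\ell(0)\bigr)\langle x_\ell, y_\ell\rangle.
\]
Since $\sum_\ell \|x_\ell\|\|y_\ell\| \leq \|\xi\|\|\eta\| = 1$ by Cauchy--Schwarz, we deduce
\[
\bigl|\langle \pi(k_\delta)\xi, \eta\rangle - \langle \pi(k_0)\xi, \eta\rangle\bigr| \leq \sup_{\ell \geq 0}\,|P_\ell(\delta) - P_\ell(0)|.
\]

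It remains to prove the uniform inequality $|P_\ell(\delta) - P_\ell(0)| \leq 2\sqrt{|\delta|}$ for all $\ell \geq 0$ and $|\delta| \leq 1$; this is the main obstacle. I would argue by regimes in $\ell|\delta|$. When $\ell|\delta| \lesssim 1$, integrate the Lipschitz bound $|P_\ell'(t)| \lesssim \sqrt{\ell}$ near $t = 0$ (from Bernstein's inequality $\sqrt{1-t^2}|P_\ell'(t)| \leq \ell$ combined with the Laplace integral expansion around $t = 0$) to obtain $|P_\ell(\delta) - P_\ell(0)| \lesssim \sqrt{\ell}\,|\delta| \lesssim \sqrt{|\delta|}$. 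When $\ell|\delta| \gtrsim 1$, invoke the classical pointwise bound $(1-t^2)^{1/4}|P_\ell(t)| \leq \sqrt{2/(\pi\ell)}$ from Laplace's integral representation to get $|P_\ell(\delta)|,\,|P_\ell(0)| \lesssim 1/\sqrt{\ell}$ for $|\delta|$ bounded away from $\pm 1$, and conclude by the triangle inequality that $|P_\ell(\delta) - P_\ell(0)| \lesssim 1/\sqrt{\ell} \lesssim \sqrt{|\delta|}$. The two regimes overlap and cover all admissible $(\ell, \delta)$; the delicate point is tracking the implicit constants so that they multiply to the claimed factor $2$.
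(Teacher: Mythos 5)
Your proposal follows essentially the same route as the paper: a Peter--Weyl reduction to the irreducible representations of $\SO(3)$, where $U$-bi-invariant coefficients are Legendre polynomials, followed by the same two-regime estimate $|P_\ell(\delta)-P_\ell(0)|\leq \min\bigl(|P_\ell(0)|+|P_\ell(\delta)|,\ |\delta|\sup_{t}|P_\ell'(t\delta)|\bigr)$ controlled by Bernstein's pointwise bound $|P_\ell(x)|\leq\min\bigl(1,\sqrt{2/(\pi\ell)}(1-x^2)^{-1/4}\bigr)$. The only minor difference is that you get the derivative bound $|P_\ell'|\lesssim\sqrt{\ell}$ near $0$ via the Laplace integral representation, whereas the paper uses the recurrence $(1-x^2)P_\ell'(x)=-\ell x P_\ell(x)+\ell P_{\ell-1}(x)$ together with the same pointwise bound; like the paper, you leave the final constant-tracking implicit.
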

\begin{proof}
By the Peter-Weyl theorem it is enough to prove the inequality for the
irreducible representations of $\SO(3)$. For the $n$-th irreducible
representation of $\SO(3)$ (the degree $n$ spherical harmonics) the
quantity $\langle \pi(k_\delta) \xi,\eta\rangle$ is equal to $\pm
P_n(\delta)$, the value at $\delta$ of the $n$-th Legendre polynomial
normalized by $P_n(1)=1$, see for example \cite{MR2426516}. So we have
to prove that $\sup_n |P_n(\delta) - P_n(0)|\leq 2
\sqrt{|\delta|}$. By bounding
\[ |P_n(\delta) - P_n(0)| \leq \min( |P_n(0)| +|P_n(\delta)|, |\delta| \max_{t \in [0,1]} |P_n'(t\delta)|)\]
and using the Bernstein inequality $|P_n(x)| \leq \min(1,\sqrt{\frac{2}{\pi n}} (1-x^2)^{-\frac 1 4})$ \cite[Theorem 7.3.3]{MR0372517} and the formula
\[ (1-x^2) P'_n(x) = -nx P_n(x)+nP_{n-1}(x)\]
expressing $P'_n$ in terms of $P_n$ and $P_{n-1}$ one deduces the
proposition.\end{proof}

\paragraph{Step 2.} The next step is to deduce regularity properties of $K$-biinvariant matrix coefficients of unitary representations of $G$. The proof is short, but there are important things happening.
\begin{prop}\label{prop:Kinvariant_coeff_Cauchy} Let $\pi$ be a unitary representation of $G$ on a Hilbert space $\cH$, and $\xi,\eta$ be $\pi(K)$-invariant unit vectors. Then for every $g_1,g_2 \in G$
\[ |\langle \pi(g_1) \xi,\eta\rangle - \langle \pi(g_2) \xi,\eta\rangle| \leq 100 \min(\|g_1\|,\|g_1^{-1}\|,\|g_2\|,\|g_2^{-1}\|)^{-\frac 1 2}.\]
\end{prop}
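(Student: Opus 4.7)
Write $\phi(g) := \langle \pi(g)\xi,\eta\rangle$. Since $\xi,\eta$ are $K$-invariant, $\phi$ is $K$-biinvariant, so by the Cartan decomposition it is a function of the singular values of $g$; the norms $\|g\|$ and $\|g^{-1}\|$ are themselves the extreme singular values. The strategy is to transfer Proposition~\ref{prop:Uinvariant_K_coeff} from $U$-biinvariant coefficients of $K$ to an inequality for $\phi$ on $G$.

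The key observation is that although $\pi(g)\xi$ is not generally $U$-invariant, for any $a$ in the centralizer $Z_G(U)$ of $U$ in $G$ (which one computes to be a $2$-dimensional subgroup containing the diagonal family $A_U := \{D(r,-r/2,-r/2):r\in\R\}$), the vector $\pi(a)\xi$ \emph{is} $U$-invariant; similarly for $\pi(b^{-1})\eta$ with $b\in Z_G(U)$. Applying Proposition~\ref{prop:Uinvariant_K_coeff} to $\pi|_K$ with the $U$-invariant unit vectors $\pi(a)\xi$ and $\pi(b^{-1})\eta$, and rewriting the resulting inner products using the $K$-invariance of $\xi,\eta$, yields the \emph{transfer inequality}
\[ \bigl|\phi(bk_\delta a) - \phi(bk_0 a)\bigr| \leq 2\sqrt{|\delta|} \qquad\text{for all } a,b\in Z_G(U),\ \delta\in[-1,1]. \]

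The rest is geometry. For $a,b \in A_U$, the matrix $bk_\delta a$ is block-diagonal (since $k_\delta$ fixes $e_3$), and its singular values can be computed explicitly from the $2\times 2$ block; varying $(a,b,\delta)$ then sweeps out a family of Cartan projections in the Weyl chamber. Combining this with the symmetric inequality obtained by replacing $U$ with the other $\SO(2)\subset K$ --- the one fixing $e_3$, conjugate to $U$ inside $K$ --- covers the complementary direction of the $2$-dimensional Weyl chamber. Given $g_1,g_2$, I would connect their Cartan projections by a short chain of such jumps, with each $\delta_j$ of order $\min(\|g\|,\|g^{-1}\|)^{-1}$, and telescope; this produces the announced inequality with a constant no larger than $100$.

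\paragraph{Main obstacle.} The Hilbert-space input is essentially a one-line corollary of Step~1. The substance is in the Cartan bookkeeping: parameterising $bk_\delta a$ by its singular values, then engineering a chain along which the $\delta_j$-parameters shrink fast enough that the sum of the $2\sqrt{|\delta_j|}$'s forms a geometric series of size $O(\min(\|g\|,\|g^{-1}\|)^{-1/2})$ --- and, in particular, getting the exponent $-\tfrac{1}{2}$ in the end and not something weaker. This is precisely where rank-$0$ reduction pays off: the analysis of matrix coefficients of $\pi$ on all of $G$ is replaced by an elementary calculation about how cosets of $U$ sit inside $K$, together with bare-hands geometry in the Weyl chamber.
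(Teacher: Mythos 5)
Your analytic step is correct and is exactly the paper's: for $a,b$ in the centraliser of $U$ the vectors $\pi(a)\xi$ and $\pi(b^{-1})\eta$ are $U$-invariant unit vectors, and Proposition~\ref{prop:Uinvariant_K_coeff} together with unitarity of $\pi$ gives $|\phi(bk_\delta a)-\phi(bk_0a)|\leq 2\sqrt{|\delta|}$ (the paper takes the special case $a=b=D(-t,\frac t2,\frac t2)$), and your second copy of $\SO(2)$, the one fixing $e_3$, plays the role of the paper's trick of applying the first estimate to $g\mapsto \pi((g^T)^{-1})$. The genuine gap is in everything you postpone under ``the rest is geometry'' and then flag as the ``main obstacle'': that is precisely the substance of this proposition, not a routine verification. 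Concretely, the exponential-distortion computation is missing: for $a=b=D(-t,\frac t2,\frac t2)$ one computes that $ak_\delta a$ has Cartan projection $(r,s,t)$ (same last coordinate $t$) exactly when $\delta=\sinh(r+\frac t2)/\sinh(-\frac{3t}{2})$, whence $2\sqrt{|\delta|}\leq 2e^{-\frac r2-s}$, and symmetrically the other family gives $|c(r,s,t)-c(r,-\frac r2,-\frac r2)|\leq 2e^{\frac t2+s}$; this single computation is where the exponent $-\frac12$ comes from, and in your text it appears only as unverified numerology (``$\delta_j$ of order $\min(\|g\|,\|g^{-1}\|)^{-1}$'').

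Second, the chain itself is never constructed, and as described (``a short chain of jumps'') it cannot work in general: each estimate is useful only in a region ($2e^{-\frac r2-s}$ is small only when $s$ is not too negative, $2e^{\frac t2+s}$ only when $s\lesssim 0$), the anchor points $\delta=0$ of one family lie on a wall where the other family's estimate has error of order $1$, and when the Cartan projections of $g_1$ and $g_2$ have very different depths no bounded-length chain with all $\delta_j$ of size $\min(\|g\|,\|g^{-1}\|)^{-1}$ joins them. The paper's argument instead runs a zig-zag confined to the strip $-1\leq s\leq 0$ and escaping to infinity (Figure~\ref{figure:zigzag}), each jump being taken from inside the region where its estimate applies, with errors decaying geometrically because $\min(r,-t)$ grows linearly along the path; both $c(g_1)$ and $c(g_2)$ are then compared to the common limit along this path, which is what produces the bound $100\min(\|g_1\|,\|g_1^{-1}\|,\|g_2\|,\|g_2^{-1}\|)^{-\frac12}$. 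Until you supply the $\delta$-versus-Cartan-projection computation and an explicit path of this kind with the region-of-validity bookkeeping, what you have is the paper's plan, correctly identified, but not a proof.
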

\begin{proof} We may regard the matrix coefficient $g \mapsto \langle \pi(g) \xi,\eta\rangle$ as the map $c\colon \Lambda \to \CC$ given by $c(r,s,t) = \langle \pi(D(r,s,t)) \xi,\eta\rangle$. For every $(r,s,t) \in \Lambda$, the matrix $D(-t,\frac{t}{2},\frac{t}{2})$ commutes with $U$, so the unit vectors $\pi(D(-t,\frac{t}{2},\frac{t}{2}))\xi$ and $\pi(D(t,-\frac{t}{2},-\frac{t}{2})) \eta$ are $U$-invariant. We can therefore apply Proposition \ref{prop:Uinvariant_K_coeff}. With $\delta = \frac{\sinh(r+\frac t 2)}{\sinh(-\frac{3t}{2})}$, we obtain
  \begin{equation}\label{eq:horizontal_estimates} |c(r,s,t) - c(-\frac{t}{2},-\frac{t}{2},t)| \leq 2 \sqrt{\delta} \leq 2 e^{\frac{r}{2} + t} = 2e^{-\frac r 2-s}.\end{equation}
Applying this to the representation $g \mapsto \pi( (g^T)^{-1})$, we also obtain
  \begin{equation}\label{eq:vertical_estimates} |c(r,s,t) - c(r,-\frac{r}{2},-\frac{r}{2})| \leq 2 e^{\frac{t}{2}+s}.\end{equation}
  These two inequalities are best understood on a picture (see Figure~\ref{figure:zigzag})~: \eqref{eq:horizontal_estimates} expresses that the amplitude of $c$ is very small (exponentially small in the distance to the origin) on lines of slope $-\frac 1 2$ in the region $s\geq -1$, whereas \eqref{eq:vertical_estimates} expresses that the amplitude of $c$ is very small on vertical lines in the region $s\leq 0$. We can join any two points of the Weyl chamber by a zig-zag path as in Figure \ref{figure:zigzag}, and combining these estimates we deduce
  \[ |c(r,s,t)-c(r',s',t')| \leq 100 \max( e^{-\frac{\min(r,-t)}{2}}, e^{-\frac{\min(r',-t')}{2}}),\]
  which is exactly the proposition.
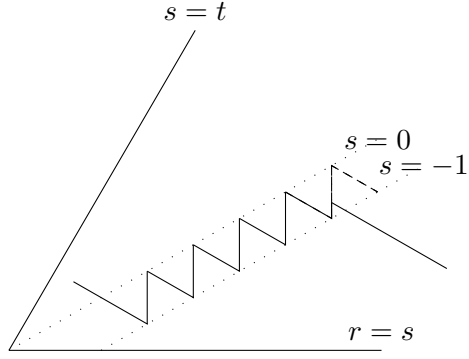
\begin{figure}[t]
  \center
\begin{tikzpicture}[scale=.7]
\draw (0,0)--(0:7) node[above] {$r=s$};
\draw (0,0)--(60:7)  node[above] {$s=t$};
\draw[shift=(-30:1), loosely dotted] (30:1)--(30:8) node {$s=-1$};
\draw[loosely dotted] (0,0)--(30:8) node {$s=0$};
 \foreach \x in {0,1,2,3}
  {
\draw[shift=(30:2+\x)] (0,0)--(-30:1)--(30:1);
  }
\draw[shift=(30:2)] (0,0)--(150:.6);
\draw[shift=(30:2+5), dashed] (-30:1)--(0,0)--(-90:1);
\draw[shift=(30:2+4), dashed] (0,0)--(-30:1)--(30:1);
\draw[shift=(30:2+5), dashed] (0,0)--(-30:1);
\begin{scope}[shift={(30:2+4)}] 
\begin{scope}[shift={(-30:1)}] 
\draw (150:1)--(0,0)--(90:0.3);
\draw[shift=(90:0.3)] (0,0)--(-30:2.5); 
\end{scope}
\end{scope}
\end{tikzpicture}
\caption{The zig-zag path in the Weyl chamber $\Lambda$.} \label{figure:zigzag}
\end{figure}
\end{proof}
If $\pi$ is a representation of $G$ on a Hilbert space $\cH$ and $g \in G$, let us denote $\pi(KgK)$ the bounded operator on $\cH$ mapping $\xi$ to $\iint_{K \times K} \pi(kgk')\xi dk dk'$, where the
integrals are with respect to the Haar probability measure on $K$. We also say that a vector $\xi \in \cH$ is \emph{harmonic} if $\pi(KgK)\xi = \xi$ for every $g \in G$.  The terminology is justified by \cite{MR47056}.
\begin{cor}\label{cor:convergence_to_harminonic_projection} If $\pi$ is a unitary representation of $G$, then
  \begin{equation}\label{eq:cor:convergence_to_projection_on_harmonics} \| \pi(KgK) - P \|_{B(\cH)} \leq 100 \min(\|g\|,\|g^{-1}\|)^{-\frac 1 2},
  \end{equation}
  where $P$ is a projection on the space of harmonic vectors.
\end{cor}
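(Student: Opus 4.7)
The plan is to read Proposition~\ref{prop:Kinvariant_coeff_Cauchy} as a Cauchy condition at the operator level. Writing $E = \int_K \pi(k)\,dk$ for the orthogonal projection onto $\cH^K$, a direct computation gives $\pi(KgK) = E\pi(g)E$, so $\pi(KgK)$ vanishes on $(\cH^K)^\perp$ and takes values in $\cH^K$. It therefore suffices to study its restriction $T_g := \pi(KgK)|_{\cH^K}$, for which $\langle T_g \xi,\eta\rangle = \langle \pi(g)\xi,\eta\rangle$ whenever $\xi,\eta \in \cH^K$. Proposition~\ref{prop:Kinvariant_coeff_Cauchy} then becomes the operator-norm Cauchy condition
\[\|T_g - T_{g'}\|_{B(\cH^K)} \le 100\min(\|g\|,\|g^{-1}\|,\|g'\|,\|g'^{-1}\|)^{-\frac 1 2},\]
so the $T_g$ admit an operator-norm limit $P_0 \in B(\cH^K)$ as $\min(\|g\|,\|g^{-1}\|)\to\infty$. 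Letting $g'\to\infty$ above gives $\|T_g - P_0\| \le 100\min(\|g\|,\|g^{-1}\|)^{-1/2}$, and extending $P_0$ by zero on $(\cH^K)^\perp$ defines $P \in B(\cH)$ satisfying \eqref{eq:cor:convergence_to_projection_on_harmonics}, since $\pi(KgK)-P$ already vanishes there.

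To identify $P$ as a projection onto the harmonic vectors, I would first note that $T_g^* = T_{g^{-1}}$ and that $g \mapsto g^{-1}$ preserves the condition $\min(\|g\|,\|g^{-1}\|)\to\infty$; passing to the limit gives $P_0^* = P_0$, hence $P^* = P$. For the algebraic content the key tool is the elementary convolution identity
\[\pi(Kg_0 K)\,\pi(KgK) = \int_K \pi(K(g_0\,u\,g)K)\,du,\]
obtained from the change of variable $v = k_2 k_3$ inside the Haar integral defining the product. Combined with the uniform lower bound
\[\min(\|g_0 u g\|, \|(g_0 u g)^{-1}\|) \ge \frac{\min(\|g\|,\|g^{-1}\|)}{\max(\|g_0\|,\|g_0^{-1}\|)},\qquad u \in K,\]
(a direct consequence of $\|AB\|\ge\|B\|/\|A^{-1}\|$ together with $\|u\|=\|u^{-1}\|=1$), this yields $\pi(Kg_0 K)\cdot P = P$ for every $g_0 \in G$. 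Hence every vector in $\mathrm{Im}(P)$ is harmonic; conversely any harmonic vector lies in $\cH^K$ and is fixed by every $T_g$, hence by the norm limit $P_0$. Finally $P^2 = P$ follows from $T_g P = P$ by letting $T_g \to P$ in norm.

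The main obstacle is this last identification step: inequality \eqref{eq:cor:convergence_to_projection_on_harmonics} is an entirely analytic statement, and turning it into the algebraic conclusions $P^2 = P$ and $\mathrm{Im}(P) = \cH^{\mathrm{harm}}$ requires exploiting the convolution structure of double cosets through the integral identity above, together with the crucial observation that $\min(\|g_0 u g\|,\|(g_0 u g)^{-1}\|)$ cannot collapse uniformly in $u \in K$ when $\min(\|g\|,\|g^{-1}\|)\to\infty$.
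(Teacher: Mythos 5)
Your proof is correct and takes essentially the same route as the paper: Proposition~\ref{prop:Kinvariant_coeff_Cauchy}, read as an operator-norm Cauchy criterion for $\pi(KgK)=E\pi(g)E$ on $\cH^K$, gives the limit $P$ with the stated bound, and the convolution identity $\pi(Kg_0K)\pi(KgK)=\int_K \pi(Kg_0ugK)\,du$ together with the uniform escape to infinity identifies $P$ as the projection onto the harmonic vectors. You merely make explicit a few points the paper leaves implicit (the uniform lower bound on $\min(\|g_0ug\|,\|(g_0ug)^{-1}\|)$, self-adjointness, and $P^2=P$).
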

\begin{proof} Taking the supremum over all $K$-invariant unit vectors in Proposition~\ref{prop:Kinvariant_coeff_Cauchy}, we obtain that $(\pi(KgK))_{g \in G}$ is Cauchy in $B(\cH)$, and that its limit $P$ satisfies \eqref{eq:cor:convergence_to_projection_on_harmonics}. $P$ is clearly the identity on the space of harmonic vectors, and the following computation shows that the image of $P$ is made of harmonic vectors,
  \[ \pi(KgK) P \xi dk = \lim_{g'\to \infty} \pi(KgK) \pi(Kg'K)\xi = \lim_{g'\to \infty} \int_K \pi(K gkg' K) \xi = P\xi.\]
  So $P$ is indeed a projection (and even \emph{the} orthogonal projection) on the space of harmonic vectors.
\end{proof}

\paragraph{Step 3.} The last step, which can be summarized as \emph{harmonic implies invariant}, is the conclusion of the proof of property (T). Let $\pi$ be a unitary representation of $G$ with almost invariant vectors. Evaluating $\pi(KgK)$ at almost invariant vectors, we see that $\pi(KgK)$ has norm $1$ for every $g \in G$. The limit $P$ in Corollary~\ref{cor:convergence_to_harminonic_projection} therefore also has norm $1$, which means that there is a nonzero harmonic vector $\xi$. For every $g \in G$, the equality $\xi = \iint \pi( k gk')\xi dk dk'$ expresses $\xi$ as an average of vectors of the same norm $\pi(kg'k)\xi$. By strict convexity of Hilbert spaces, we have that $\pi(kgk')\xi = \xi $ for every $k,k'$, in particular $\xi$ is $\pi(g)$-invariant. So $\xi$ is a nonzero invariant vector. This proves that $G=\SL_3(\R)$ has property (T).

For further reference, we can rephrase
Proposition~\ref{prop:Uinvariant_K_coeff} in terms of the
operators $T_\delta\colon L_2(S^2) \to L_2(S^2)$ defined by $T_\delta
f(x) $ is the average of $f$ on the circle $\{y \in S^2 | \langle
x,y\rangle = \delta\}$. Identifying $S^2$ with
$\SO(3)/\SO(2)$, we see that
Proposition~\ref{prop:Kinvariant_coeff_Cauchy} is equivalent to
\begin{equation} \label{eq:norm_of_Tdelta}\|T_\delta - T_0\| \leq 2
\sqrt{|\delta|}.
\end{equation}


\subsection{Comments on the proofs}

The distinct steps in the proof of propery (T) for $\SL_3(\R)$ have
different nature. The first step is analytic and deals with harmonic
analysis on a compact group, and more precisely on the pair $(U
\subset K)$ of compact groups. The second step is
geometric/combinatorial. What happens is that one studies the various
$K$-equivariant embeddings of the sphere $S^2$ (identified with $K/U =
\SO(3)/\SO(2)$) into the symmetric space $G/K = \SL_3(\R) /
\SO(3)$. The relative position between a pair of such embeddings gives
rise to an embedding of $U\backslash K/U = [-1,1]$ inside the Weyl
chamber $K \backslash G /K$. There are three types of such embeddings:
the segments in Figure \ref{figure:zigzag} ; others, that are of no
use for us, are segments parallel to the line $s=0$. Combining these
embeddings allows us to explore the whole Weyl chamber of $\SL(3,\R)$,
and to prove that $K$-biinvariant matrix coefficients of unitary
representations of $G$ are H\"older $\frac 1 2$-continuous away from
the boundary of $\Lambda$. The crucial fact that leads to
\eqref{eq:horizontal_estimates} and \eqref{eq:vertical_estimates}
expresses that the interesting embeddings are exponentially distorted
in the distance to the origin in the Weyl chamber, and hence that this
exploration allows us to escape to infinity \emph{in finite time}. The
last step is rather obvious, but will become much more involved later.

It is informative to make similar computations for rank $1$ simple Lie
groups $G$ which contain a subgroup isomorphic to $\SO(3)$ (for
example for $\SO(3,1)$). In that case, one gets also lots of
embeddings of $[-1,1]$ inside the Weyl chamber $[0,\infty)$ of $G$,
  and also enough to explore the whole Weyl chamber and prove H\"older
  $\frac 1 2$-regularity in the interior of the Weyl chamber. The
  difference (and the reason why this does not create a contradiction
  by proving that $\SO(3,1)$ has property (T)!) is that these
  embeddings are almost isometric, and so it takes an infinite time to
  explore the whole Weyl chamber. In a sense, only the segments going
  straight but slowly to infinity exist in rank one. Those that are
  used in the zig-zag argument, that take less direct routes but are
  faster, only appear in higher rank.

The fact that all the analysis is done at the level of the compact groups $U,K$ is very important, because harmonic analysis for compact groups is much better understood than for arbitrary groups (see for example the very easy results in Lemma \ref{lem:A_M0A_compact}, \ref{lem:FellBanach}, \ref{lem:TwoStepRepsOfCompactGroups}). This is what permits to use a similar approach for other objects than coefficients of unitary representations, and to prove rigidity results in various other linear settings. This is the content of the remaining of this survey.

\subsection{Induction and property (T) for $\SL_3(\Z)$}\label{subsec:induction}
For later reference, we recall the classical argument why property (T) for $\SL_3(\R)$ implies property (T) for $\SL_3(\Z)$. We uses Minkowski's theorem that $\SL_3(\Z)$ is a lattice in $\SL_3(\R)$. A lattice in a locally compact group $G$ is a discrete subgroup such that the quotient $G/\Gamma$ carries a finite $G$-invariant Borel probability measure. Equivalently, there is a Borel probability measure $\mu$ on $G$ whose image in $G/\Gamma$ is $G$-invariant. The proof that property (T) passes to lattices uses induction of representations. If $\pi$ is a unitary representation of $\Gamma$ on a Hilbert space $\cH$, the space of the induced representation is the space $L_2(G,\mu;\cH)^\Gamma$ of measurable functions $G \to \cH$ that satisfy $f(g\gamma) = \pi(\gamma^{-1}) f(g)$ for every $g \in G$ and $\gamma$ in $\Gamma$, and such that $\int_G \|f(g)\|^2 d\mu(g) < \infty$. It is equipped with a unitary representation of $G$ by left-translation $g \cdot f = f(g^{-1}\cdot)$. If $\pi$ almost has invariant vectors, then so does the induced representation. By property (T) for $G$, it has a non zero invariant vector. This vector is a constant function with values in $\cH^\pi$. So $\pi$ has invariant nonzero vectors.

\section{Fourier series, approximation properties, operator algebras}\label{sec:fourier}
\subsection{Fourier series, absence of Fourier synthesis}
If $1<p<\infty$, it is very well-known (this follows immediately from Marcel Riesz's theorem that the Hilbert transform $f \mapsto \sum_{n \geq 0} \hat f(n) e^{2i\pi \cdot}$ is bounded on $L_p$) that the Fourier series of every function $f \in L_p(\R/\Z)$ converges in $L_p$:
\[ \lim_N \|f - \sum_{n=-N}^N \hat f(n) e^{2i\pi \cdot}\|_p = 0.\]
This is not true for $p=1$ or $p=\infty$, but there are more clever \emph{summation methods}, for example Fej\'er's method: if we denote $W_N(n) = \max(1-\frac{|n|}{N},0)$, then
\[ \lim_N \|f - \sum_n W_N(n) \hat f(n) e^{2i\pi \cdot}\|_p = 0,\]
and this time the convergence holds for $L_1$, and even $L_\infty$ if $f$ is continuous. All this remains true on the torus $(\R/\Z)^d$ of arbitrary dimension, and more generally on the Pontryagin dual $\hat \Gamma$ of every dicrete abelian group.

When $\Gamma$ is a non-abelian discrete  group, $\hat \Gamma$ does not make sense as a group, but the space $C(\hat \Gamma)$, $L_\infty(\hat \Gamma)$ and $L_p(\hat \Gamma)$ have a very natural meaning: they are respectively the reduced $C^*$-algebra $C^*_\lambda(\Gamma)$, the von Neumann algebra $\LL\Gamma$ and the non-commutative $L_p$ space \cite{MR1999201} of the von Neumann algebra of $\LL \Gamma$. Recall that if $\lambda$ is the left regular representation (by left-translation) of $\Gamma$ on $\ell_2(\Gamma)$, $C_\lambda^*(\lambda)$ is the norm closure in $B(\ell_2(\Gamma))$ of the linear span of $\lambda(\Gamma)$, $\LL\Gamma$ is its closure for the weak-operator topology, and $L_p(\LL \Gamma)$ its completion for the norm $x \mapsto \langle |x|^p \delta_e,\delta_e\rangle^{\frac 1 p}$. The elements of each of these spaces admit a Fourier series $f= \sum_\gamma \hat f(\gamma) \lambda(\gamma)$. This is a formal series, whose convergence is not clear in general (except in $L_2$). One can wonder in that case whether, as in the case when $\Gamma$ is abelian, there exist Fourier summation methods in this context, \emph{i.e.} a sequence of functions $W_N \colon \Gamma \to \CC$ with finite support such that 
\begin{equation}\label{eq:Fourier} f = \lim_N \sum_\gamma W_N(\gamma) \hat f(\gamma) \lambda(\gamma),\end{equation}
for every $f$ in $C^*_\lambda(\Gamma)$ or $L_p(\LL\Gamma)$ (convergence in norm). It is well known that this is the case when $\Gamma$ is amenable. According to a celebrated (and surprising at that time) result by Haagerup \cite{MR520930}, this is also true when $\Gamma$ is a non-abelian free group. This result has inspired a massive research program on approximation properties of group operator algebras. Indeed, by the Banach-Steinhaus theorem, if \eqref{eq:Fourier} holds, then the maps $f \mapsto \sum_\Gamma W_N(\gamma)  \hat f(\gamma) \lambda(\gamma)$, called Fourier multipliers, are uniformly bounded, have finite rank and converge pointwise to the identity. In \cite{MR784292} de Cannière and Haagerup realized that in the case of free groups, the convergence even holds for every $f$ in the $C^*$-algebra $B(\ell^2) \otimes C^*_\lambda(\Gamma)$, that is when the Fourier coefficients $\hat f(\gamma)$ are bounded operators on $\ell^2$. When this holds, $\Gamma$ is said to be weakly amenable. Moreover, a function $W\colon \Gamma \to \CC$ is said to be a completely bounded Fourier multiplier if the map $f \mapsto \sum_\gamma W(\gamma) \hat f(\gamma) \otimes \lambda(\gamma)$ is bounded on $B(\ell^2) \otimes C^*_\lambda(\Gamma)$. So weak amenability comes with a constant, which is the smallest common upper bound on these completely bounded norms, among all sequences of finitely supported multipliers achieving \eqref{eq:Fourier}. Mutatis mutandis, completely bounded Fourier multipliers and weak amenability also make sense for locally compact groups, and importantly restriction to closed subgroups and induction from lattices work well for completely bounded Fourier multipliers. In particular, the weak amenability constant coincide for a group and a lattice \cite{MR3476201}. The major achievement in this direction was obtained in a series a work by Haagerup with de Canni\`ere and Cowling \cite{MR784292,MR996553,MR3476201}, that lead to the exact computation of the weak amenability constant of all simple Lie groups. Excluding exceptional groups, it is equal to $1$ for $\SO(n,1)$, $\SU(n,1)$, $2n-1$ for $\SP(n,1)$ and is infinite for higher rank groups.

In \cite{MR1220905}, Haagerup and Kraus discovered a strange phenomenon~: it might happen that there is no sequence $W_N$ satisfying \eqref{eq:Fourier} for every $f \in B(\ell^2) \otimes C^*_\lambda(\Gamma)$, but there exists such a generalized sequence (or net).\footnote{Remembering that the Banach-Steinhaus theorem is false for nets might help imagine how such a statement could be true. An example is given by $\SL_2(\Z) \ltimes \Z^2$: it has the AP as the semidirect product of two weakly amenable groups and AP is stable by group extensions \cite{MR1220905}. That it is not weakly amenable was proved in \cite{MR3476201} and also reproved in \cite{MR2914879}.} A group for which such a net exists is said to have \emph{the approximation property} of Haagerup and Kraus, or simply AP. 

So groups without the AP are groups in which no $L_\infty$-summation method exists whatsoever for operator coefficients. It has been difficult to produce such groups. It was known \cite{MR1220905} that non-exact groups \cite{zbMATH05057460} would be such examples, but they are difficult to construct. Haagerup et Kraus had conjectured that $\SL_3(\Z)$ was another example. This conjecture turned out to be delicate, because classical approaches to rigidity in higher-rank lattices, which rely on the subgroup $\SL_2(\Z) \ltimes \Z^2$ or its relative $\SL_2(\R) \ltimes \R^2$, are inefficient by the strange phenomenon described above. It is the ideas described in Section~\ref{sec:SL3} that allowed to settle it.

\begin{theorem}\label{thm:nonAPSL3}\cite{LaffdlS} $\SL_3(\Z)$ does not have the approximation property of Haagerup and Kraus.
\end{theorem}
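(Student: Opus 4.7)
The plan is to adapt the rank $0$ reduction strategy of Section \ref{sec:SL3} to completely bounded Fourier multipliers in place of matrix coefficients of unitary representations, producing a rigidity statement incompatible with AP. Since AP is preserved between a locally compact group and its lattices, it suffices to prove the result for $G = \SL_3(\R)$.

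The central technical step is a cb version of Proposition \ref{prop:Uinvariant_K_coeff}: any $U$-biinvariant completely bounded Fourier multiplier $\varphi$ on $K = \SO(3)$, viewed as a function of the double-coset parameter $\delta \in [-1,1]$, satisfies $|\varphi(\delta) - \varphi(0)| \leq C\|\varphi\|_{cb}\sqrt{|\delta|}$. The unitary statement reduces, through Peter--Weyl, to a scalar estimate involving Legendre polynomials; the cb analogue must instead control, uniformly over the irreducibles of $K$, a Schur-type multiplier acting simultaneously on all the matrix blocks of $\LL K$, which amounts to a cb-norm version of the estimate \eqref{eq:norm_of_Tdelta}. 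Once this is in hand, the zig-zag argument of Proposition \ref{prop:Kinvariant_coeff_Cauchy} transfers verbatim: $K$-averaging an arbitrary cb multiplier $\varphi$ on $G$ produces a $K$-biinvariant cb multiplier $\tilde\varphi$, regarded as a function on the Weyl chamber $\Lambda$, that satisfies a Cauchy-type estimate controlled by $\|\varphi\|_{cb}$ and hence extends continuously to a natural compactification $\bar\Lambda$ of $\Lambda$ by asymptotic directions, with a boundary value at infinity that depends boundedly and linearly on $\varphi$.

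To conclude by contradiction, assume $G$ has AP, so there is a net $\varphi_i \colon G \to \CC$ of finitely (hence compactly) supported cb multipliers with $m_{\varphi_i}$ converging to the identity in the stable point-weak-$*$ topology on $B(\ell^2) \otimes C^*_\lambda(G)$. After $K$-averaging, each $\tilde\varphi_i$ remains compactly supported on $\Lambda$, so its boundary value in $\bar\Lambda$ is zero. On the other hand, the pointwise limit of the $\tilde\varphi_i$ is the constant function $1$, whose boundary value is $1$. Showing that the boundary-value map is continuous in the topology underlying AP (or, equivalently, arranging the net to have uniformly bounded cb norms via a suitable reformulation of AP) closes the loop $0 = 1$. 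The equivalence of AP for a group and its lattices then transfers the conclusion to $\SL_3(\Z)$.

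The main obstacle is the compact-group estimate, the cb upgrade of Proposition \ref{prop:Uinvariant_K_coeff}. The unitary proof diagonalizes over the irreducibles of $\SO(3)$ and reduces to a single scalar computation per level, whereas the cb analogue demands a bound on a Schur-type multiplier that treats all blocks simultaneously and is insensitive to the representation. A secondary but delicate point is the passage to the boundary in the contradiction step: one must verify that the limit operation respects the continuous extension to $\bar\Lambda$, which calls for either a reformulation of AP producing uniformly bounded cb norms or a direct continuity property of the boundary-value map in the relevant weak topology.
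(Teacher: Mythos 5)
Your overall strategy is the same as the paper's (reduce to $\SL_3(\R)$, prove a H\"older estimate for $U$-biinvariant cb multipliers of $\SO(3)$, run the zig-zag argument to produce a boundary functional at infinity in the Weyl chamber, and separate the identity multiplier from the compactly supported ones), but the proposal leaves its self-declared ``main obstacle'' unresolved, and in fact misjudges it. No uniform-over-blocks Schur-multiplier estimate is needed: for a compact (more generally amenable) group $K$, a function is a completely bounded Fourier multiplier of $C^*_\lambda(K)$ if and only if it is a matrix coefficient of a unitary representation, with comparable norms (Lemma~\ref{lem:A_M0A_compact} in the paper). Given a $U$-biinvariant cb multiplier $\varphi = \langle \pi(\cdot)\xi,\eta\rangle$, averaging $\xi$ and $\eta$ over $\pi(U)$ leaves $\varphi$ unchanged and produces $U$-invariant vectors, so Proposition~\ref{prop:Uinvariant_K_coeff} applies verbatim and yields $|\varphi(k_\delta)-\varphi(k_0)|\leq 2\sqrt{|\delta|}\,\|\varphi\|_{cb}$. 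Without this (easy but essential) input your ``central technical step'' is simply missing, and the rest of the argument does not start.

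The second gap is in your concluding alternative: ``arranging the net to have uniformly bounded cb norms via a suitable reformulation of AP'' is impossible. A uniformly cb-bounded net of finitely supported multipliers converging pointwise to $1$ is exactly weak amenability, which is strictly stronger than AP; the whole difficulty emphasized in the paper (the Haagerup--Kraus phenomenon, i.e.\ the failure of Banach--Steinhaus for nets) is that AP gives no uniform bound. The correct route is the other one you mention, and it needs to be made precise as in the paper: the averaged evaluation functionals $\varphi \mapsto \iint_{K\times K}\varphi(kgk')\,dk\,dk'$ belong to the canonical predual of the space of completely bounded Fourier multipliers of $\SL_3(\R)$ (the space whose weak-$*$ topology defines AP), and the zig-zag estimate of Proposition~\ref{prop:Kinvariant_coeff_Cauchy} shows they form a Cauchy net \emph{in the norm of that predual} as $g\to\infty$. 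The limit functional is therefore automatically continuous for the topology relevant to AP, vanishes on compactly supported multipliers, and takes the value $1$ on the constant multiplier $1$; this Hahn--Banach-type separation contradicts AP for $\SL_3(\R)$, and the lattice/induction step then gives Theorem~\ref{thm:nonAPSL3}. Your ``continuity of the boundary-value map'' must be established in exactly this way; as stated it is an unverified hypothesis rather than a proof.
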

The original proof of this theorem was not direct and went through $L_p$ Fourier theory (Theorem~\ref{thm:nonAPLpSL3}), but thanks to several simplifications \cite{MR3047470,vergarapAP,habilitation}, a very easy proof is now known. We have already justified that, by induction, we can as well prove the Theorem for $\SL_3(\R)$. The starting point is the following straightfoward lemma (which is known to hold more generally if \cite{MR0040592} and only if \cite{MR806070} the locally compact group $K$ is amenable), which provides a characterization of completely bounded Fourier multipliers of compact groups.
\begin{lemma}\label{lem:A_M0A_compact} Let $K$ be a compact group. A function $\varphi \colon K \to \CC$ defines a completely bounded Fourier multiplier of $C^*_\lambda(K)$ if and only if $\varphi$ is a matrix coefficient of a unitary representation of $K$.
\end{lemma}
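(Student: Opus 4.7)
The plan is to prove both implications through the Herz--Schur / Gilbert factorization of completely bounded Fourier multipliers, relying in the non-trivial direction on the finiteness of the Haar measure of $K$.

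For the \emph{if} direction, assume $\varphi(k)=\langle\pi(k)\xi,\eta\rangle$ for a unitary representation $\pi\colon K\to U(H)$ and vectors $\xi,\eta\in H$. The algebraic identity
\[\varphi(st^{-1})=\langle\pi(st^{-1})\xi,\eta\rangle=\langle\pi(s)\xi,\pi(t)\eta\rangle\]
exhibits $\varphi$ as a Herz--Schur (Gilbert) multiplier with symbols $P(s)=\pi(s)\xi$ and $Q(t)=\pi(t)\eta$, of uniform norms $\|\xi\|$ and $\|\eta\|$. By the Bo\.zejko--Fendler characterization of completely bounded Fourier multipliers, this implies that $\varphi$ defines a cb Fourier multiplier of $C^*_\lambda(K)$, of norm at most $\|\xi\|\|\eta\|$. (Alternatively, a polarization reduces to $\xi=\eta$, in which case $\varphi$ is positive definite and the induced multiplier is completely positive on $C^*_\lambda(K)$, hence completely bounded.)

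For the \emph{only if} direction, assume that $M_\varphi$ is a cb Fourier multiplier of $C^*_\lambda(K)$. By Bo\.zejko--Fendler, there exist a Hilbert space $H$ and bounded measurable maps $P,Q\colon K\to H$ with $\varphi(st^{-1})=\langle P(s),Q(t)\rangle$ for all $s,t\in K$. Specializing $s=gh$, $t=h$ yields, for every $h\in K$, $\varphi(g)=\langle P(gh),Q(h)\rangle$. I now exploit that $K$ is compact: averaging this identity against the normalized Haar measure, and changing variables $h\mapsto g^{-1}h$, gives
\[\varphi(g)=\int_K\langle P(gh),Q(h)\rangle\,dh=\int_K\langle P(h),Q(g^{-1}h)\rangle\,dh=\langle \tilde P,\pi_L(g)\tilde Q\rangle_{L_2(K;H)},\]
where $\pi_L(g)f(h)=f(g^{-1}h)$ is the (unitary) left regular representation of $K$ on $L_2(K;H)$, and $\tilde P,\tilde Q\in L_2(K;H)$ are the restrictions of $P,Q$, which are square integrable because $K$ has finite Haar measure. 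Passing to the contragredient then identifies $\varphi$ as a matrix coefficient of a unitary representation of $K$, as required.

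The only non-elementary input is the Bo\.zejko--Fendler theorem identifying cb Fourier multipliers with Herz--Schur multipliers; this is the technical backbone of the argument. Once it is granted, the remaining work is a short averaging computation, and the fact that this averaging step would fail without an invariant mean explains the footnoted equivalence of the lemma with amenability of $K$ in the general locally compact setting.
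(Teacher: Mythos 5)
Your proof is correct, and it coincides with the argument the paper is implicitly invoking: the lemma is stated there without proof, with references to the classical fact that it holds (if and only if) the locally compact group is amenable, and your route --- the Gilbert/Bo\.{z}ejko--Fendler factorization of completely bounded multipliers followed by averaging against the invariant mean, which for compact $K$ is just the Haar probability measure --- is exactly the standard proof of that result specialized to the compact case. The only cosmetic point is a convention slip in the \emph{if} direction, where $\langle \pi(s)\xi,\pi(t)\eta\rangle=\varphi(t^{-1}s)$ rather than $\varphi(st^{-1})$; this is harmless, since both the class of completely bounded Fourier multipliers and the class of matrix coefficients are stable under $\varphi \mapsto \varphi(\,\cdot\,^{-1})$ and under complex conjugation.
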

Proposition~\ref{prop:Uinvariant_K_coeff} therefore says that $\SO(2)$-biinvariant completely bounded Fourier multipliers of $C^*_\lambda(\SO(3))$ are H\"older $\frac 1 2$ continuous in the interior of $\SO(2)\backslash\SO(3)/\SO(2)$. The same proof as Proposition~\ref{prop:Kinvariant_coeff_Cauchy} then produces a map from $\SO(3)\backslash\SL_3(\R)/\SO(3)$ into the dual of the space of completely bounded Fourier multipliers of $\SL_3(\R)$, which satisfies the Cauchy criterion. Its limit vanishes on compactly supported functions and takes the value $1$ on the identity multiplier. This is exactly a Hahn-Banach type separation, which says that $\SL_3(\R)$ does not have the AP. 

The same argument can also be used to say something about $L^p$ Fourier summability for some finite $p$ and to obtain the following, which is an equivalent form of the main result in \cite{LaffdlS}.
\begin{theorem}\label{thm:nonAPLpSL3}\cite{LaffdlS} For every $4<p<\infty$ or $1 \leq p < \frac 4 3$, there is $f \in L_p(\LL \SL_3(\Z) \otimes B(\ell_2))$ such that, for every finitely supported $W:\SL_3(\Z) \to \CC$,
\[ \| f - \sum_\gamma W(\gamma) \hat f(\gamma) \lambda(\gamma)\|_{p} \geq 1.\]
\end{theorem}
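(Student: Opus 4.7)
The plan is to mimic the three-step structure of the non-AP proof, but with all the compact-group analysis transported to non-commutative $L_p$ rather than to $C^*$-algebras. By the well-known fact that lattice induction behaves well for completely bounded $L_p$ Fourier multipliers (just as it did for cb multipliers of $C^*_\lambda$), it is enough to prove the analogous statement for $G=\SL_3(\R)$: produce an element $f \in L_p(\LL G \otimes B(\ell_2))$ (or its continuous analogue) that is bounded away from the closure of the compactly supported Fourier projections. In view of the Hahn--Banach philosophy used for Theorem~\ref{thm:nonAPSL3}, this amounts to exhibiting a nonzero continuous linear functional on the space of completely bounded $L_p$ multipliers of $G$ that vanishes on those with compact support. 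The existence of $f$ would then follow by duality in the natural pairing between $L_p(\LL G)$ and the cb $L_p$ multiplier space.

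The first step is the rank $0$ input, an $L_p$ version of Proposition~\ref{prop:Uinvariant_K_coeff}: one needs a quantitative decay of the form $\|T_\delta-T_0\|_{L_p(S^2)\to L_p(S^2)} \to 0$ as $\delta \to 0$, with a power-type modulus. Expanding any $U$-biinvariant function on $K=\SO(3)$ in Legendre polynomials as in the $p=2$ proof, this reduces to estimates of $|P_n(\delta)-P_n(0)|$ weighted appropriately by the $L_p$-norms of the spherical harmonics. The same Bernstein/derivative recursion used for $p=2$ still applies, but now with the $L_p \to L_p$ norms of spherical projections, and this is precisely where the range $p \in (4,\infty) \cup [1,\frac{4}{3})$ appears: outside this range the zonal harmonics do not decay fast enough in $L_p$ for a nontrivial modulus to survive. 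I would combine this with the $L_p$-analogue of Lemma~\ref{lem:A_M0A_compact} (the characterization of completely bounded $L_p$ multipliers of a compact group by $L_p$-representation coefficients) to conclude that every $U$-biinvariant cb $L_p$ multiplier of $K$ is Hölder continuous in the interior of $U\backslash K/U$, with an explicit exponent $\alpha(p)>0$.

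The second step is the zig-zag argument, carried out verbatim as in Proposition~\ref{prop:Kinvariant_coeff_Cauchy}. Given a cb $L_p$ multiplier $\varphi$ of $G$, the function $c(r,s,t)=\varphi(D(r,s,t))$, restricted to $K$-biinvariant cosets, can be compared along the two families of lines in the Weyl chamber $\Lambda$ shown in Figure~\ref{figure:zigzag}: on each such line the difference of $c$ at two points is controlled by the compact-group modulus of continuity pulled back through a conjugation by a diagonal element, and the gain factor is still $\exp(-\Omega(\mathrm{dist}\text{ to origin}))$, exactly because the interesting $\SO(3)$-embeddings are exponentially distorted. The exponential distortion absorbs the loss of passing from $\delta$ to $\sqrt{|\delta|}^{\alpha(p)}$, yielding a Cauchy-type estimate
\[ |c(g_1)-c(g_2)| \leq C(p)\,\|\varphi\|_{\mathrm{cb}\,M_p}\,\min(\|g_1\|,\|g_1^{-1}\|,\|g_2\|,\|g_2^{-1}\|)^{-\beta(p)} \]
for some $\beta(p)>0$. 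Integrating against a $K$-biinvariant state in the cb $L_p$ multiplier dual gives, as in Corollary~\ref{cor:convergence_to_harminonic_projection}, a limit functional $\Phi$ on the closure of cb $L_p$ multipliers, which vanishes on compactly supported multipliers (because for such $\varphi$ the coefficient $c(g)$ is zero for large $g$) while taking the value $1$ on the constant multiplier $\varphi \equiv 1$.

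The hardest part is the first step: getting the right $L_p$-modulus of continuity for $T_\delta-T_0$ and, simultaneously, the correct cb $L_p$ analogue of Lemma~\ref{lem:A_M0A_compact}. The exponents $p=4$ and $p=4/3$ are critical for the Bernstein/Legendre estimates, and one has to make sure the averaging inequality used in the zig-zag is genuinely a completely bounded (not merely bounded) statement on $L_p(\LL K \otimes B(\ell_2))$, so that tensoring with $B(\ell_2)$ produces no loss. Once this is in hand, the geometric step and the Hahn--Banach separation are mechanical transpositions of the arguments already given for Theorem~\ref{thm:nonAPSL3}.
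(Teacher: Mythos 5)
Your overall architecture (rank $0$ input on the pair $\SO(2)\subset\SO(3)$, zig-zag exploration of the Weyl chamber, Hahn--Banach separation) is the right one, but two of your concrete steps are not just technical gaps: they are precisely the points where the actual proof has to do something different. First, your reduction to $\SL_3(\R)$ rests on the claim that lattice induction ``behaves well for completely bounded $L_p$ Fourier multipliers, just as for cb multipliers of $C^*_\lambda$''. This is false for $1<p\neq 2<\infty$: induction (and restriction) of cb $L_p$ Fourier multipliers is known to work well essentially only for amenable groups, and this failure is exactly the obstruction the proof must circumvent. The correct device is to replace Fourier multipliers by Herz--Schur multipliers on $S_p(L_2(G))$, i.e.\ maps $(A_{g,h})\mapsto (W(gh^{-1})A_{g,h})$, for which restriction to the lattice $\SL_3(\Z)$ does work; one then uses that for compact (more generally amenable) groups Schur $S_p$ multipliers and Fourier $L_p$ multipliers coincide, so that the rank $0$ analysis on $K=\SO(3)$ can still be carried out and transferred.

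Second, you locate the source of the restriction $p>4$ (or $p<\frac43$) in an $L_p(S^2)\to L_p(S^2)$ modulus of continuity for $T_\delta-T_0$. That quantity cannot produce any restriction: by Riesz--Thorin interpolation between the trivial $L_1$ and $L_\infty$ bounds and the Hilbertian estimate \eqref{eq:norm_of_Tdelta}, one has H\"older continuity of $\delta\mapsto T_\delta$ in $B(L_p(S^2))$ for \emph{every} $1<p<\infty$ (this is exactly how \eqref{eq:norm_TdeltaonX} is verified for $L_p$ spaces). The quantitative input that drives the theorem is instead the Schatten-class estimate \eqref{eq:Spestimate_Tdeltaupper}: $T_\delta-T_0$, viewed as an operator on the Hilbert space $L_2(S^2)$, lies in $S_p$ only for $p>4$ (it is not in $S_4$ for $\delta\neq 0$), with $\|T_\delta-T_0\|_{S_p}\lesssim_p |\delta|^{\frac12-\frac2p}$. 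It is this $S_p$-norm (for $p$ or its conjugate exponent) that makes the pointwise evaluation of a biinvariant Schur $S_p$ multiplier continuous in the multiplier norm, hence makes your separating functional well defined on the multiplier space; its blow-up at $p=4$ is where the range $4<p<\infty$, $1\leq p<\frac43$ comes from. With these two corrections --- Schur $S_p$ multipliers in place of cb $L_p$ Fourier multipliers for the induction, and the $S_p(L_2(S^2))$ estimate in place of the $B(L_p(S^2))$ estimate in Step 1 --- your zig-zag and separation steps go through as you describe, and you recover the proof sketched in the paper.
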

Again, the proof has the same structure as in Section~\ref{sec:SL3}. To initiate the first step, we need to investigate in more details the spectral decomposition of the operators $T_\delta$ in \eqref{eq:norm_of_Tdelta}. For a Hilbert space $\cH$, the Schatten $p$-class $S_p(\cH)$ is the space of operators $T$ on $\cH$ such that $\|T\|_{S_p}:=( \operatorname{Tr}( |T|^p))^{\frac 1 p}<\infty$. It can be shown \cite{LaffdlS} that the operators $T_\delta$, $\delta \in (-1,1)$ belong to $S_p(L_2(S^2))$ if $p>4$, and there is a constant $C$ such that for every $\delta,\delta' \in [-1/2,1/2]$, 
\begin{equation}\label{eq:Spestimate_Tdeltaupper} \|T_\delta - T_{0}\|_{S_p} \leq \frac{C}{(p-4)^{1/p}} |\delta|^{\frac{1}{2} - \frac{2}{p}}.\end{equation}
With this inequality, we can run the argument of Section~\ref{sec:SL3} and obtain a form of Theorem~\ref{thm:nonAPLpSL3} for $\SL_3(\R)$. However, the induction procedure for completely bounded $L_p$ Fourier multipliers, which works well when $p=\infty$ \cite{MR1220905,MR3476201}, is problematic for $1<p\neq 2<\infty$. It is known to work well mainly for amenable groups \cite{CaspdlS15,MR3482270,MR2866074}. The solution is to work with Herz-Schur multipliers on $S_p(L_2(G))$, that is operators of the form $A = (A_{g,h}) \in S_p(L_2(G)) \mapsto (W(gh^{-1}) A_{g,h})$ for functions $W\colon G\to \CC$, for which induction works well. Fortunately, for compact groups (and even amenable groups \cite{CaspdlS15,MR2866074}) Schur $S_p$ and Fourier $L_p$ multipliers coincide.

The preceding sketch is the only proof I know of Theorem~\ref{thm:nonAPLpSL3}. It can be shown that \eqref{eq:Spestimate_Tdeltaupper} is optimal, and that $T_\delta - T_0$ does not belong to $S_4$ for any $\delta\neq 0$. So this idea cannot work for $2 <p\leq 4$, and it is an intriguing question whether this condition is really needed for Theorem~\ref{thm:nonAPLpSL3}. As we will explain in Section~\ref{sec:otherGroups}, a positive answer to this question would allow to distinguish the von Neumann algebra of $\SL_3(\Z)$ and $\PSL_n(\Z)$ for $n >4$, and would confirm a conjecture of Connes. The challenge is to construct nontrivial $L_p$ Fourier multipliers for $\SL_3(\Z)$. A first step is to do so for $\SL_3(\R)$. Together with Parcet and Ricard \cite{ParcetRicarddlS}, we made some progress on that recently by proving a satisfactory \emph{local} form of the H\"ormander-Mikhlin multiplier theorem for $\SL_n(\R)$.

\subsection{Approximation properties for Banach spaces and operator algebras}\label{subsection:approximation_properties}

In his thesis \cite{MR0075539}, Grothendieck initiated the study of tensor product of topological vector spaces, and realized the tight connection with the Banach's approximation property AP: a Banach space $E$ has the approximation property (AP) if the identity operator belongs to the closure of the space of finite rank operators, for the topology of uniform convergence on compact sets. He was even led to conjecture that all Banach space have the AP (this would make the theory of tensor product simpler!). Later in his \emph{R\'esum\'e} \cite{MR94682} he changed his mind and actually expected that Banach spaces exist, which fail the AP. R\'ecoltes et Semailles \cite{RecoltesSemailles} contains a moving part, where Grothendieck explains how much he suffered from the year he spent on working on this problem without progress. It was only much later than the first example of Banach space was \emph{constructed} by Enflo \cite{MR402468}, followed by many other examples. See the survey \cite{MR1863695} for a list. Let me emphasize: the examples of Enflo, as most other examples, are obtained from delicate combinatorial constructions, and in particular they are not Banach space, the existence of which was known to Grothendieck. There is, to my knowledge, only one example of Banach space that is both natural (not obtained from an \emph{ad hoc} construction) and known to fail the AP, namely the space $B(\ell_2)$ of all bounded operators on $\ell_2$ \cite{MR631090}. This space is not reflexive and we are lacking separable and natural examples of space with the AP. For some time in the 1970's, a candidate of such a satisfactory example was $C^*_\lambda(\free_2)$, the reduced $C^*$-algebra on the non-abelian free group with two generators, and the hope was that the lack of AP could be explained by the non amenability of $\free_2$. Haagerup broke this hope in \cite{MR520930} by proving, better, that $C^*_\lambda(\free_2)$ has the metric AP (the identity belongs to the closure of the norm $1$ finite rank operators). More precisely, as explained in the previous section, $\free_2$ is weakly amenable with constant $1$. However, two serious candidates of natural separable Banach spaces without the AP remain in the same vein: $C^*(\free_2)$, the full $C^*$-algebra of $\free_2$ and $C^*_\lambda(\SL_3(\Z))$. It is hoped that the first lacks the AP for the simple reason that $\free_2$ is non-amenable, and the latter for reasons related to the ideas in Section~\ref{sec:SL3}.

This last conjecture has not been settled, but its weakening in the sense of operator spaces is known. Indeed, there are natural variants of Grothendieck's approximation property in the category of operator spaces rather than Banach spaces (replacing bounded operators by completely bounded operators \cite{MR2006539}), that we denote by OAP and CBAP. The CBAP, meaning that the identity on $E$ belongs to the closure of the finite rank operators with completely bounded norm bounded above by some constant $C$, comes with a constant (the best such $C$). Given a discrete group $\Gamma$, the fundamental observation of Haagerup \cite{MR3476201} (and Kraus for OAP \cite{MR1220905}) is that the CBAP/OAP for $C^*_\lambda(\Gamma)$ (and respectively its variants for dual spaces for the von Neumann algebra $\LL(\Gamma)$) can be achieved with finitely supported Fourier multipliers. As a consequence, the weak amenability constant of a discrete group $\Gamma$ is an invariant of its von Neumann algebra, and so is the AP of Haagerup and Kraus. For example, the computation of the weak amenability constant of simple Lie groups \cite{MR784292,MR996553,MR3476201} discussed above allows to distinguish the von Neumann algebra of a lattice in $\SP(n,1)$ (for $n\neq 3, n\neq 11$) from the von Neumann algebra of a lattice in a simple Lie group that is not locally $\SP(n,1)$. Also, Theorem~\ref{thm:nonAPSL3} can be rephrased as \emph{$C^*_\lambda(\SL_3(\Z))$ does not have the approximation property in the category of operator spaces}. This was the first example of an exact $C^*$-algebra without the OAP. Similarly, Theorem~\ref{thm:nonAPLpSL3} can be rephrased as \emph{$L_p(\LL \SL_3(\Z))$ does not have the OAP for $p>4$}. For $4<p<80$, this was the first example of a non-commutative $L_p$ space without the CBAP.

\section{Non unitary representations : Lafforgue's strong property (T)}\label{sec:strongT}
This section is devoted to non unitary representations on Hilbert spaces.
\subsection{Strong property (T) for $\SL_3(\R)$}
It is useful at this point to have a look back at the proof of property (T) for $\SL_3(\R)$ in Section~\ref{sec:SL3} and see precisely where the assumption that the representations are unitary was used. Step 1 is about the compact group $\SO(3)$, and therefore only uses that the restriction to $\SO(3)$ is unitary. This is not a very strong assumption, as compact groups are unitarizable:
\begin{lemma}\label{lem:FellBanach} Every representation of a compact group on a Hilbert space is similar to a unitary representation.
\end{lemma}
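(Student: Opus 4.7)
The plan is the classical averaging argument: given a (strongly continuous) representation $\pi\colon K \to GL(\cH)$ of the compact group $K$, define a new inner product
\[ \langle \xi,\eta\rangle_\pi := \int_K \langle \pi(k)\xi,\pi(k)\eta\rangle\, dk,\]
where $dk$ is normalized Haar measure on $K$. By construction, this inner product is $\pi(K)$-invariant, so $\pi$ is unitary with respect to $\langle\cdot,\cdot\rangle_\pi$. If one shows that $\langle\cdot,\cdot\rangle_\pi$ is equivalent to the original Hilbert space inner product, then the positive square root $T$ of the Gram operator implementing the change of inner product is a bounded invertible operator on $\cH$ such that $T\pi(\cdot)T^{-1}$ is unitary, which is exactly the desired similarity.

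The first step I would carry out is therefore to prove that $C := \sup_{k \in K} \|\pi(k)\| < \infty$. This is the main (and essentially only) obstacle, since strong continuity alone does not make norm continuity automatic. The argument is Banach--Steinhaus: for each fixed $\xi \in \cH$, the map $k \mapsto \pi(k)\xi$ is continuous on the compact space $K$, hence its image is bounded, so $\sup_{k \in K}\|\pi(k)\xi\| < \infty$ for every $\xi$. The uniform boundedness principle then gives $C < \infty$.

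Once this is in hand, everything is routine. The integral defining $\langle\xi,\eta\rangle_\pi$ exists because the integrand is continuous and bounded by $C^2\|\xi\|\|\eta\|$, which also gives the upper bound $\langle\xi,\xi\rangle_\pi \leq C^2 \langle\xi,\xi\rangle$. For the lower bound, one uses the same estimate applied to $\pi(k^{-1})$ at each point: $\|\xi\|^2 = \|\pi(k^{-1})\pi(k)\xi\|^2 \leq C^2 \|\pi(k)\xi\|^2$, so integrating over $k$ gives $\|\xi\|^2 \leq C^2 \langle\xi,\xi\rangle_\pi$. Thus the two inner products are equivalent, the new norm is a Hilbert norm induced by a bounded invertible positive operator, and the similarity follows as indicated above.

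A brief remark for later reuse in the excerpt: the similarity constant one extracts is at most $C^2 = \sup_{k\in K}\|\pi(k)\|^2$, which is the natural quantitative input for arguments where one wants to control how far from unitary a representation of $\SL_3(\R)$ can be after restriction to its maximal compact subgroup $K=\SO(3)$.
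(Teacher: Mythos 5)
Your proof is correct: the uniform bound $C=\sup_{k\in K}\|\pi(k)\|<\infty$ via Banach--Steinhaus (using that each orbit map is continuous on the compact group, hence has bounded image), the $\pi(K)$-invariant averaged inner product, the two-sided equivalence estimates $C^{-2}\|\xi\|^2\leq\langle\xi,\xi\rangle_\pi\leq C^2\|\xi\|^2$, and conjugation by the positive square root of the Gram operator indeed yield the desired similarity. The paper states this lemma without proof, and your argument is exactly the classical averaging (unitarization) proof it implicitly relies on, so there is nothing to correct or add.
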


In particular, every representation of $\SL_3(\R)$ is similar to a representation whose restriction to $\SO(3)$ is unitary.

Step 2 did not use that $\pi$ is unitary in a strong way, and remains true (with a different constant and the exponent $-\frac 1 2$ replaced by $2\alpha-\frac{1}{2}$) if
\[ \alpha =  \alpha(\pi) := \limsup_{|t|\to \infty} \frac{1}{|t|}\log \| \pi( D(-t,\frac t 2,\frac t 2))\|< \frac 1 4.\]
So if $\pi$ is a representation of $\SL_3(\R)$ on a Hilbert space such that $\alpha(\pi)<\frac 1 4$, we obtain that $\pi(KgK)$ converges in norm to a projection on the space of harmonic vectors. Step 3 apparently relies more fundamentally on the fact that $\pi$ is unitary. However it is still true when $\alpha(\pi)<\frac 1 4$ that harmonic vectors are invariant. This requires new ideas, which make step 3 the most involved part, see \cite{MR2423763} or the presentation in \cite{dlSIsrael15}. To summarize, the condition $\alpha(\pi)<\frac 1 4$ is enough to guarantee that $\pi(KgK)$ converges in norm to a projection on the invariant vectors. In the terminology of Vincent Lafforgue \cite{MR2423763}, $\SL_3(\R)$ has strong property (T), which is a form of property (T) for representations on Hilbert spaces with small exponential growth rate. Let us spell out the definition.

If $G$ is a locally compact function, a length function $\ell : G\to \R_+$ is a function that is bounded on compact subsets, that is symmetric $\ell(g) = \ell(g^{-1})$ and subadditive $\ell(gh)\leq \ell(g) + \ell(h)$. Let us denote $\cC_\ell(G)$ the completion of $C_c(G)$ for the norm given by $\|f\|_{\cC_\ell(G)} = \sup \|\pi(f)\|$ where the supremum is over all representations of $G$ on a Hilbert space such that $\|\pi(g)\| \leq e^{\ell(g)}$ for every $g \in G$. It is a Banach algebra for the convolution. An element $p \in \cC_\ell(G)$ is called a \emph{Kazhdan projection} if $\pi(p)$ is a projection on the invariant vectors $\cH^\pi$ of $\pi$ for every such representation. Kazhdan projections have been investigated in \cite{MR4000570,dlSlocalKazhdan}. If a Kazhdan projection exists, $\pi(p)$ is the unique $G$-equivariant projection on $\cH^\pi$, so $p$ is unique \cite{dlSlocalKazhdan}.
\begin{definition}\label{def:strongT} \cite{MR2423763} $G$ has strong property (T) if for every length function $\ell$, there is $s>0$ such that for every $C>0$, $\cC_{s \ell +C}(G)$ has a Kazhdan projection.
\end{definition}
A posteriori, a group with strong property (T) is necessarily compactly generated (this is even a consequence of property (T)), and it is enough to check the definition for $\ell$ the word-length with respect to a compact generating set.

We have just explained the proof of the following theorem.
\begin{theorem}\cite{MR2423763}\label{thm:strongTSL3R} $\SL_3(\R)$ has strong property (T).
\end{theorem}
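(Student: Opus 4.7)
The plan is to follow the three-step scheme of Section~\ref{sec:SL3}, adapting each step to representations with small exponential growth. Fix a length function $\ell$ on $G = \SL_3(\R)$; by compact generation we may take $\ell$ to be the word length with respect to a compact generating set. I would choose $s < \tfrac14$ and, given $C > 0$, work uniformly with all representations $\pi$ on Hilbert spaces satisfying $\|\pi(g)\| \le e^{s\ell(g)+C}$. The goal is to produce a single element $p \in \cC_{s\ell+C}(G)$ such that $\pi(p)$ is the projection onto $\cH^\pi$ for every such $\pi$.

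Step~1 requires no change: by Lemma~\ref{lem:FellBanach}, $\pi|_K$ is similar to a unitary representation, so after absorbing the similarity constant into $C$ we may apply Proposition~\ref{prop:Uinvariant_K_coeff} as stated. In Step~2, I would rerun the proof of Proposition~\ref{prop:Kinvariant_coeff_Cauchy}, keeping track of the extra factors $\|\pi(D(-t,\tfrac t2,\tfrac t2))\| \le e^{s|t|+C'}$ that enter when switching $\xi$ to $\pi(D(-t,\tfrac t2,\tfrac t2))\xi$. These factors degrade the exponent in \eqref{eq:horizontal_estimates} and \eqref{eq:vertical_estimates} from $-\tfrac12$ to $2s-\tfrac12$, which is still negative when $s < \tfrac14$, so the zig-zag argument of Figure~\ref{figure:zigzag} still shows that $\pi(KgK)$ forms a norm-Cauchy net with a quantitative rate depending only on $s$ and $C$ and not on $\pi$. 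By uniformity, this Cauchy property persists at the Banach algebra level: the averages $KgK$ (viewed as elements of $\cC_{s\ell+C}(G)$) converge in norm to an element $p$, and $\pi(p) = \lim_g \pi(KgK)$ is a projection onto the space of harmonic vectors for every admissible $\pi$.

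The hard part will be Step~3, namely identifying harmonic vectors with invariant vectors in the non-unitary setting. The unitary argument relied on strict convexity of the Hilbert norm to conclude from $\xi = \iint \pi(kgk')\xi\,dk\,dk'$ that $\pi(kgk')\xi = \xi$ for almost every $k,k'$, but this fails when $\pi$ is not isometric, since $\|\pi(kgk')\xi\|$ varies with $k,k'$. What can still be said is that $\pi(p)$ is $G$-equivariant by construction (the $KgK$ averages absorb left and right translations in the limit), so its range is a closed $G$-invariant subspace on which every vector is harmonic. One is thus reduced to showing that a representation of growth rate $<\tfrac14$ in which every vector is harmonic must be trivial. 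The argument in \cite{MR2423763}, revisited in \cite{dlSIsrael15}, achieves this through a further, more delicate averaging: harmonicity combined with the uniform norm control forces $g$-invariance element by element, essentially by iterating the biaveraging along carefully chosen approximate one-parameter subgroups and invoking the uniqueness of the resulting $G$-equivariant projection. This is the genuinely new ingredient beyond the unitary proof of Section~\ref{sec:SL3}.
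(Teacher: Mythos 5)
Your proposal follows the paper's own argument essentially verbatim: Steps 1 and 2 are adapted exactly as in the survey, using Lemma~\ref{lem:FellBanach} to unitarize $\pi|_K$ and the degraded exponent $2s-\tfrac12<0$ for $s<\tfrac14$ to get norm-convergence of $\pi(KgK)$ uniformly over the class, hence an element $p\in\cC_{s\ell+C}(G)$, and the hard Step 3 (harmonic implies invariant for growth rate $<\tfrac14$) is deferred to \cite{MR2423763} and \cite{dlSIsrael15}, which is precisely what the paper does. One caution: your interim claim that $\pi(p)$ is $G$-equivariant ``by construction'', and the ensuing reduction to representations in which every vector is harmonic, is not justified at that stage in the non-unitary setting (equivariance of the limit and $G$-invariance of its range are consequences of, not substitutes for, the harmonic-implies-invariant argument), but since you ultimately rely on the same references for that step, this does not change the overall agreement with the paper's proof.
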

Moreover the Kazhdan projection belongs to the closure of $C_c(G)_+:=\{f \in C_c(G) \mid f(g) \geq 0 \forall g \in G\}$. Some authors \cite{brownFisherHurtado,brownFisherHurtado2,brownFisherHurtado3,brownDamjanovicZhang} add this precision to the definition because, as we will see below, this is crucial for some applications.

Vincent Lafforgue's original motivation for this definition was his work on the Baum-Connes conjecture. Indeed, strong property (T) (and variants of it) are the natural obstructions for applying some of his ideas (and in particular the ideas in \cite{MR2874956}) to groups such as $\SL_3(\Z)$. We refer to \cite{MR2732057} for more on the link with the Baum-Connes conjecture.

\subsection{Strong property (T) for $\SL_3(\Z)$}
Theorem~\ref{thm:strongTSL3R} also holds for $\SL_3(\Z)$, but the proof turned out to be delicate. In particular, we do not see how to prove in general that strong property (T) passes to lattices.
\begin{theorem}\cite{dlSActa}\label{thm:strongTSL3Z} $\SL_3(\Z)$ has strong property (T).
\end{theorem}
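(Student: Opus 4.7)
The naive induction argument used in Section~\ref{subsec:induction} to transfer property (T) from $G = \SL_3(\R)$ to $\Gamma = \SL_3(\Z)$ breaks down completely here: if $\pi$ is a representation of $\Gamma$ with $\|\pi(\gamma)\| \leq e^{s \ell_\Gamma(\gamma)}$, the representation of $G$ induced from $\pi$ typically has exponential growth rate far larger than $s$, so the Kazhdan projection supplied by Theorem~\ref{thm:strongTSL3R} cannot be applied to it. The plan is therefore to bypass induction entirely and carry out the transfer at the level of multipliers, where restriction from $G$ to $\Gamma$ is well behaved.

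First, I would refine the proof of Theorem~\ref{thm:strongTSL3R} to produce, for sufficiently small $s > 0$, a Kazhdan projection for $G$ that arises as the norm limit of compactly supported, radial, $K$-biinvariant functions inside a single Banach algebra of completely bounded radial Herz-Schur multipliers of $G$ weighted by $e^{s \ell_G}$, rather than merely as a limit of the operators $\pi(KgK)$ in each individual representation. The zig-zag argument of Step~2 in Section~\ref{sec:SL3} already takes place at the multiplier level through \eqref{eq:norm_of_Tdelta}, so the task is to reorganize those estimates into a norm-Cauchy statement inside a single, representation-independent algebra of radial multipliers. Next I would invoke the fact that Herz-Schur multipliers restrict contractively to closed subgroups, combined with the Lubotzky-Mozes-Raghunathan theorem that $\ell_\Gamma$ is bi-Lipschitz equivalent to the restriction to $\Gamma$ of a length function on $G$. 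Then the weight $e^{s \ell_G}$ on $G$ controls $e^{s' \ell_\Gamma}$ on $\Gamma$ for some $s' > 0$, and restricting the Cauchy sequence from $G$ to $\Gamma$ yields a candidate Kazhdan projection $p_\Gamma \in \cC_{s' \ell_\Gamma + C}(\Gamma)$.

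Finally I would verify that $p_\Gamma$ really acts as a projection onto $\pi(\Gamma)$-invariant vectors for every representation $\pi$ of $\Gamma$ with small enough exponential growth; the formal identity $p_\Gamma \cdot p_\Gamma = p_\Gamma$ transfers from $G$ along the restriction homomorphism, and invariance of the range follows from the harmonic-implies-invariant argument in Step~3 applied directly to $\pi$. The hard part is the refined first step: upgrading the pointwise convergence of $\pi(KgK)$ in individual representations to a norm convergence of functions on $G$ inside an algebra of multipliers rich enough to act, via restriction, on \emph{all} representations of $\Gamma$ with small exponential growth. This forces one to control, uniformly over a very large class of representations, both the radial multiplier norms of the building blocks $\mathbf{1}_{KgK}$ and the error terms produced by the zig-zag argument, and it is precisely this uniformity that makes the proof in \cite{dlSActa} substantially more involved than that of Theorem~\ref{thm:strongTSL3R}.
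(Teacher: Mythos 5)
Your diagnosis of the obstacle (naive induction inflates the exponential growth rate) and your appeal to Lubotzky--Mozes--Raghunathan are on target, but the central transfer mechanism you propose does not exist. A Kazhdan projection is an element of $\cC_{s\ell+C}(\Gamma)$, i.e.\ a limit of finitely supported functions acting by $\pi(f)=\sum_\gamma f(\gamma)\pi(\gamma)$ on \emph{every} small-growth representation $\pi$ of $\Gamma$; it is a convolution-type object, not a Fourier or Herz--Schur multiplier. Herz--Schur multipliers are functions $W\colon G\to\CC$ acting by $\lambda(g)\mapsto W(g)\lambda(g)$, and it is for \emph{those} that restriction to closed subgroups is contractive --- this is exactly what is exploited for the AP results of Section~\ref{sec:fourier}, where the output is a Hahn--Banach separation, not a projection. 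The approximants of the Kazhdan projection for $G=\SL_3(\R)$ are (limits of) the averaging measures over double cosets $KgK$; restricting such a function pointwise to the lattice $\Gamma$ destroys the averaging structure entirely ($\Gamma\cap KgK$ is an essentially arbitrary discrete set), and there is no reason the restricted finitely supported functions act as approximate projections in representations of $\Gamma$. Note also that if a restriction argument of this kind worked it would show quite generally that strong property (T) passes to lattices, which the paper explicitly states is not known. A further gap: your final step invokes ``harmonic implies invariant applied directly to $\pi$'', but for a representation of $\Gamma$ there are no operators $\pi(KgK)$ at all ($K\not\subset\Gamma$), and even over $G$ this step is, in the non-unitary setting, the genuinely hard part rather than a formal consequence of Step~3 of Section~\ref{sec:SL3}. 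Finally, \eqref{eq:norm_of_Tdelta} is the compact-group (Step~1) estimate, not the zig-zag Step~2, so it cannot by itself furnish the representation-independent Cauchy statement you want.

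The proof in \cite{dlSActa} goes in the opposite direction: one \emph{does} induce the representation $\pi$ of $\Gamma$ to $G$, accepting that for non-uniformly-bounded $\pi$ the induced action is not by bounded operators on any single $L_2(G,\mu;\cH)^\Gamma$. Using the exponential integrability of $\SL_3(\Z)$ inside $\SL_3(\R)$ coming from the Lubotzky--Mozes--Raghunathan theorem \cite{MR1244421}, one chooses $\mu$ so that, when $\pi$ has small enough exponential growth, translation is bounded from $L_p(G,\mu;\cH)^\Gamma$ to $L_q(G,\mu;\cH)^\Gamma$ whenever $\frac1q-\frac1p\geq\frac12$. This yields a \emph{two-step representation} of $G$ with $X_0=L_\infty(G,\mu;\cH)^\Gamma$, $X_1=L_2(G,\mu;\cH)^\Gamma$, $X_2=L_1(G,\mu;\cH)^\Gamma$, and the theorem is then deduced from a version of strong property (T) for two-step representations of $\SL_3(\R)$ with Hilbertian middle space, whose compact-group input is Lemma~\ref{lem:TwoStepRepsOfCompactGroups}. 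So the extra work lies in weakening the notion of representation so that induction survives, not in upgrading multiplier estimates so that restriction applies.
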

The way Theorem~\ref{thm:strongTSL3Z} is proved is by introducing and working with representation-like objects, where one is only allowed to compose once and that I call \emph{two-step representations}.
\begin{definition} A two-step representation of a topological group $G$ is a tuple $(X_0,X_1,X_2,\pi_0,\pi_1)$ where $X_0,X_1,X_2$ are Banach spaces and $\pi_i\colon G \to B(X_i,X_{i+1})$ are strongly continuous\footnote{\emph{i.e.} for every $x \in X_i$, the map $g \in G \mapsto \pi(g)x \in X_{i+1}$ is continuous.} maps such that
  \[ \pi_1(gg') \pi_0(g'') = \pi_1(g) \pi_0(g' g'')\textrm{ for every }g,g',g'' \in G.\]
In this case we will denote by $\pi \colon G \to B(X_0,X_2)$ the continuous map satisfying $\pi(gg')=\pi_1(g) \pi_0(g')$ for every $g,g' \in G$.
\end{definition}
The reason for this introduction is that two-step representations appear naturally when we induce non unitary representations. Indeed, let $\Gamma \subset G$ be a lattice. For every probability measure $\mu$ on $G$ as in Subsection~\ref{subsec:induction}, we can consider the space $L_2(G,\mu;\cH)^\Gamma$.  
When $\Gamma$ is a cocompact lattice (that is $G/\Gamma$ is compact), Lafforgue \cite{MR2423763} observed that it is possible to choose $\mu$ in such a way that the induced representation remains by bounded operators on $L_2(G,\mu;\cH)^\Gamma$, with small exponential growth if the original representation of $\Gamma$ had small exponential growth. Therefore, strong property (T) passes to cocompact lattices \cite{MR2423763}. When $\Gamma$ is not cocompact and $\pi$ is not uniformly bounded, there does not seem to be any choice of $\mu$ for which the representation is by well-defined (bounded) operators. However, in the particular case of $\SL_3(\Z) \subset \SL_3(\R)$, if $\mu$ is well-chosen and if the original representation $\pi$ of has small enough exponential growth, it possible to show that the representation $g \cdot f=f(g^{-1} \cdot)$ is bounded from $L_p(G,\mu;\cH)^\Gamma \to L_q(G,\mu;\cH)^\Gamma$ whenever $\frac 1 q - \frac 1 p \geq \frac 1 2$. This uses some strong exponential integrability properties of $\SL_3(\Z)$ in $\SL_3(\R)$, which rely on the celebrated Lubotzky-Mozes-Raghunathan theorem \cite{MR1244421}. In particular, we obtain a two-step representation with $X_0 =  L_{\infty}(G,\mu;\cH)^\Gamma$, $X_1 =  L_{2}(G,\mu;\cH)^\Gamma$ and $X_2 = L_{1}(G,\mu;\cH)^\Gamma$. So Theorem~\ref{thm:strongTSL3Z} is a consequence of a form of strong property (T) for two-step representations of $\SL_3(\R)$ where $X_1$ is a Hilbert space. This is done following the strategy in Section~\ref{sec:SL3}. The starting point is again a straightforward statement, which asserts that two-step representations of compact groups are governed by usual representations:
\begin{lemma}\label{lem:TwoStepRepsOfCompactGroups} If $(X_0,X_1,X_2,\pi_0,\pi_1)$ is a two-step representation of a compact group $K$ where $X_1$ is a Hilbert space, then there is a constant $C$ such that, for every $f \in C_c(K)$
  \[ \|\pi(f)\|_{B(X_0,X_2)} \leq C \|\lambda(f)\|_{C^*_\lambda(K)}.\]
\end{lemma}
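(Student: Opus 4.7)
The plan is to factor the operator $\pi(f)$ through the Hilbert space $L^2(K;X_1)$ in such a way that the $f$-dependent piece in the middle is conjugate to $\lambda(f)\otimes \operatorname{id}_{X_1}$, whose norm will coincide with $\|\lambda(f)\|_{C^*_\lambda(K)}$ precisely because $X_1$ is a Hilbert space. Since $K$ is compact and $\pi_0,\pi_1$ are strongly continuous, the Banach--Steinhaus theorem gives finite constants $C_0 := \sup_K \|\pi_0\|$ and $C_1 := \sup_K \|\pi_1\|$, and the $C$ of the statement will just be their product.

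To set up the factorization, I would start from the cocycle relation: writing $g = (gh^{-1})h$ for any $h \in K$ gives $\pi(g) = \pi_1(gh^{-1})\pi_0(h)$, and averaging over $h$ with respect to the Haar probability measure yields
\[ \pi(g) = \int_K \pi_1(gh^{-1})\pi_0(h)\, dh. \]
Substituting into $\pi(f) = \int f(g)\pi(g)\, dg$ and changing variables $g' = gh^{-1}$,
\[ \pi(f) = \int_K\!\!\int_K f(g'h)\, \pi_1(g')\pi_0(h)\, dh\, dg'. \]
This is exactly $\pi(f) = B\circ \Phi(f) \circ A$, where $A\colon X_0 \to L^2(K;X_1)$ is given by $(Ax)(h) = \pi_0(h)x$, where $B\colon L^2(K;X_1) \to X_2$ is given by $Bv = \int_K \pi_1(g')v(g')\, dg'$, and where $\Phi(f) \in B(L^2(K;X_1))$ is the convolution-type operator $\Phi(f)v(g') = \int_K f(g'h)\, v(h)\, dh$. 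One checks at once that $\|A\| \leq C_0$, while $\|B\| \leq C_1$ follows from Cauchy--Schwarz using that $K$ has total Haar measure $1$.

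The heart of the matter is then the identity $\|\Phi(f)\|_{B(L^2(K;X_1))} = \|\lambda(f)\|_{C^*_\lambda(K)}$. Changing variables $u = g'h$ rewrites $\Phi(f)v(g') = \int_K f(u)\, v(g'^{-1}u)\, du$, and a short calculation identifies $\Phi(f) = (\lambda(f)\otimes \operatorname{id}_{X_1})\circ J$, where $J$ is the isometric involution $(Jv)(g) = v(g^{-1})$ of $L^2(K;X_1)$ (isometric by inverse-invariance of the Haar measure on the compact group $K$). Since $X_1$ is a Hilbert space, the natural identification $L^2(K;X_1) \simeq L^2(K)\otimes_2 X_1$ as Hilbert spaces gives $\|\lambda(f)\otimes \operatorname{id}_{X_1}\| = \|\lambda(f)\|_{C^*_\lambda(K)}$. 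This last equality is the only place where the Hilbertian structure of $X_1$ is essential, and I expect it to be the main subtle point of the argument: for a general Banach space $X_1$, one would at best get an estimate involving completely bounded or $\gamma_2$ norms and the clean statement of the lemma would fail. Combining the three bounds yields $\|\pi(f)\|_{B(X_0,X_2)} \leq C_0 C_1\, \|\lambda(f)\|_{C^*_\lambda(K)}$, as required.
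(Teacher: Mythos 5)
Your proof is correct, and it is essentially the expected argument: the paper states this lemma without proof (deferring to the reference), and the standard proof is exactly your factorization $\pi(f)=B\circ\Phi(f)\circ A$ through $L^2(K;X_1)$, with $A,B$ uniformly bounded by Banach--Steinhaus and with $\Phi(f)=(\lambda(f)\otimes\mathrm{id}_{X_1})\circ J$ having norm $\|\lambda(f)\|_{C^*_\lambda(K)}$ precisely because $X_1$ is Hilbertian. You also correctly isolate the only place where the Hilbert space assumption on $X_1$ enters, namely $\|\lambda(f)\otimes\mathrm{id}_{X_1}\|=\|\lambda(f)\|$ on $L^2(K)\otimes_2 X_1$.
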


\subsection{Applications of Strong property (T)}
Let us end this section with two applications of strong property (T). We will see in the next section other applications of variants of strong property (T) for representations on Banach spaces. All applications have in common that strong (T) is used as a way to systematically find and locate fixed point. The first is a result about vanishing of first cohomology spaces for representations with small exponential growth. If $\pi$ is a representation of $G$ on a space $\cH$, we denote by $H^1(G,\pi)$ the quotient of the space of cocycles
\[Z^1(G,\pi):= \{ b \in C(G,\cH) \mid \forall g_1,g_2 \in G,b(g_1g_2)= b(g_1) + \pi(g_1) b(g_2)\}\]
by the subspace of coboundaries
\[ B^1(G,\pi) = \{ g \mapsto \pi(g) \xi - \xi \mid \xi \in \cH\}.\]
Hence $H^1(G,\pi)$ parametrizes the continuous affine actions of $G$ on $\cH$ with linear part $\pi$, up to a change of origin. Delorme and Guichardet have proved that a second countable locally compact group $G$ has property (T) if and only if $H^1(G,\pi)=0$ for every unitary representation $\pi$. The following result has the same flavour, but the proof is different. The idea is that any $b \in Z^1(G,\pi)$ gives rise to a representation $\begin{pmatrix} \pi(g) & b(g) \\ 0 &1 \end{pmatrix}$ with the same exponential growth rate on $\cH\oplus \CC$.
\begin{prop}\label{prop:H1}\cite{MR2574023} If $G$ has strong property and $\ell$ is a length function on $G$, there is $s>0$ such that $H^1(G,\pi) = 0$ for every representation with exponential growth rate $\leq s$.
\end{prop}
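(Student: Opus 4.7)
The plan is to follow the hint and, given a cocycle $b \in Z^1(G, \pi)$, form the representation $\tilde\pi$ on $\cH \oplus \CC$ defined by $\tilde\pi(g)(v, z) = (\pi(g) v + z\, b(g),\, z)$, that is, by the upper block-triangular matrix $\bigl(\begin{smallmatrix}\pi(g) & b(g)\\ 0 & 1\end{smallmatrix}\bigr)$. The cocycle identity for $b$ is exactly what makes $\tilde\pi$ a group homomorphism, and the idea is to apply the Kazhdan projection provided by strong property (T) to $\tilde\pi$ and extract from its invariant vectors that $b$ is a coboundary.

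First I would check that $\tilde\pi$ has exponential growth rate comparable to that of $\pi$. After reducing to the case where $\ell$ is the word length with respect to a compact generating set $S$ (strong (T) implies that $G$ is compactly generated), the cocycle identity unfolds, for $g = h_1 \cdots h_n$ with $h_i \in S$, as $b(g) = \sum_{i=1}^n \pi(h_1 \cdots h_{i-1})\, b(h_i)$. Combined with $\|\pi(h_1 \cdots h_{i-1})\| \leq e^{s(i-1) + C_\pi}$ coming from the exponential growth of $\pi$, and with $b$ being bounded on the compact set $S$, this yields $\|b(g)\| \leq C_b\, e^{s\ell(g)}$, and therefore $\|\tilde\pi(g)\| \leq e^{s\ell(g) + C}$ for some constant $C = C(b, \pi)$. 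Hence $\tilde\pi$ extends to a continuous representation of the Banach algebra $\cC_{s\ell + C}(G)$.

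Now take $s > 0$ to be the constant provided by strong property (T) for $\ell$ and assume $\pi$ has exponential growth rate $\leq s$. Strong (T) then supplies a Kazhdan projection $p \in \cC_{s\ell + C}(G)$, so $P := \tilde\pi(p)$ is a projection onto $\cH^{\tilde\pi}$. A direct computation shows that any $(v, z) \in \cH^{\tilde\pi}$ with $z \neq 0$ forces $b(g) = \pi(g)(-v/z) - (-v/z)$, exhibiting $b$ as a coboundary. Assuming by contradiction that $b \notin B^1(G, \pi)$, every $\tilde\pi$-invariant vector must then have vanishing second coordinate, so $P$ takes values in $\cH^\pi \oplus \{0\}$. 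To conclude, compare $P$ with the $G$-equivariant projection $q_2 \colon \cH \oplus \CC \to \CC$, where $\CC$ carries the trivial representation: for $f \in C_c(G)$ one has $q_2 \tilde\pi(f) = \bigl(\int f\bigr)\, q_2$, and by density of $C_c(G)$ in $\cC_{s\ell + C}(G)$ this extends to $q_2 P = \mathrm{triv}(p)\, q_2 = q_2$, since the Kazhdan projection of the trivial representation on $\CC$ acts as the identity. Evaluating at $(0, 1)$ yields $1 = q_2 P(0, 1) = 0$, a contradiction; hence $b$ must have been a coboundary.

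The step I expect to need most care is the naturality argument at the end: the Kazhdan projection $p$ exists only as an abstract element of the Banach algebra $\cC_{s\ell + C}(G)$, so the identity $q_2 P = q_2$ has to be obtained by continuity from the $C_c(G)$ case, and this in turn requires both $\tilde\pi$ and the trivial $\CC$-representation to sit inside the class of representations for which strong (T) provides a genuine projection onto invariants (i.e.\ the constants $s$ and $C$ must be chosen compatibly for both). The exponential-growth bound on $b$, by contrast, is a routine consequence of the telescoping cocycle identity once a proper length function has been fixed.
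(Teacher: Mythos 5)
Your proof is correct and follows exactly the route the paper sketches: the block-upper-triangular representation $\bigl(\begin{smallmatrix}\pi(g) & b(g)\\ 0 & 1\end{smallmatrix}\bigr)$ on $\cH\oplus\CC$, the telescoped growth bound on $b$ along a word metric, and the Kazhdan projection from strong property (T) together with its naturality against the equivariant coordinate map onto the trivial summand. (A cosmetic simplification: applying $P$ to $(0,1)$ and using $q_2P=q_2$ yields an invariant vector of the form $(v,1)$, so $b(g)=v-\pi(g)v$ directly, with no need to argue by contradiction.)
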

This proposition can be used to show that strong property (T) is incompatible with hyperbolic geometry, and in particular that infinite Gromov-hyperbolic groups do not have strong property (T). Indeed, in \cite{MR2423763}, for every group $G$ acting with infinite orbits on a Gromov-hyperbolic graph with bounded degree, a representation with quadratic growth rate on a Hilbert space is constructed with $H^1(G,\pi) \neq 0$. By Proposition~\ref{prop:H1}, such group cannot have strong property (T).

Another notable application of strong property (T) was found in the resolution of most cases of Zimmer's conjecture by Brown, Fisher and Hurtado (see the texts by Aaron Brown and David Fisher in these proceedings). The following is a particular case of their result.
\begin{theorem}\cite{brownFisherHurtado}\label{thm:bfh} Let $G$ be a locally compact group with length function $\ell$ and $\alpha:G \to \operatorname{Diff}(M)$ an action by $C^\infty$ diffeomorphisms on a compact Riemannian manifold with subexponential growth of derivatives:
  \[ \forall \varepsilon>0, \sup_{g \in G} e^{-\varepsilon \ell(g)} \sup_{x \in M} \|D_x\alpha(g)\|<\infty.\]
  If $G$ has strong property (T) with Kazhdan projections in the closure of $C_c(G)_+$, then for every $k$, $\alpha$ preserves $C^k$ Riemannian metrics on $M$. In particular for the original metric, $\alpha$ has bounded derivatives.
\end{theorem}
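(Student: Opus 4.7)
The plan is to apply strong property (T) to a pullback representation on a Sobolev space of symmetric $2$-tensors, and then use the positivity of the Kazhdan projection to average a Riemannian metric into an invariant one. Fix $k\geq 1$ and choose $s > \dim(M)/2 + k$ so that the Sobolev embedding $\cH := H^s(M; S^2 T^*M) \hookrightarrow C^k(M; S^2 T^*M)$ holds. Define the pullback representation $\pi(g)\omega = \alpha(g)^*\omega$ on $\cH$. A chain-rule computation bounds $\|\pi(g)\omega\|_{H^s}$ by $\|\omega\|_{H^s}$ times a polynomial in the derivatives of $\alpha(g)^{\pm 1}$ up to some fixed order; combined with the subexponential growth of derivatives hypothesis, this yields that for every $\varepsilon>0$ there is $C_\varepsilon$ with $\|\pi(g)\|_{B(\cH)} \leq C_\varepsilon e^{\varepsilon \ell(g)}$, and strong continuity is a routine check.

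\textbf{Applying strong property (T).} Let $s_0 > 0$ be the constant produced by Definition~\ref{def:strongT} for the length function $\ell$. Pick $\varepsilon < s_0$; then $\pi$ extends to a continuous Banach algebra morphism $\cC_{s_0 \ell + \log C_\varepsilon}(G) \to B(\cH)$. By strong property~(T), a Kazhdan projection $p$ exists in that algebra, and by hypothesis $p = \lim p_n$ with $p_n \in C_c(G)_+$. Therefore $\pi(p_n) \to \pi(p)$ in $B(\cH)$, and $\pi(p)$ is a projection onto the closed subspace $\cH^\pi$ of $G$-invariant $H^s$-sections.

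\textbf{Averaging a metric.} Choose a smooth Riemannian metric $g_0 \in \cH$ and set $\omega := \pi(p) g_0 \in \cH \hookrightarrow C^k$; by construction $\omega$ is $G$-invariant. Writing $\pi(p_n) g_0 = \int_G p_n(h)\, \alpha(h)^* g_0 \, dh$ as a Bochner integral in $\cH$, pointwise evaluation commutes with the integral since it is continuous on $C^k$. Hence $(\pi(p_n) g_0)(x)$ is a positive linear combination of positive-definite tensors, so positive semi-definite; passing to the $C^k$-limit gives $\omega \geq 0$ pointwise. Applying the Kazhdan property to the trivial representation forces $\int_G p_n \to 1$, so $\omega$ is not identically zero.

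\textbf{Main obstacle and conclusion.} The delicate step, which I expect to be the main obstacle, is upgrading $\omega$ from positive semi-definite to strictly positive definite. The degeneracy locus $Z := \{(x,v) \in \mathbb{P}TM : \omega_x(v,v) = 0\}$ is a closed proper $G$-invariant subset of the projective tangent bundle, and ruling it out requires extra input: either iterating the averaging over a well-chosen family of initial metrics, combining $\omega$ with the analogous average of the inverse tensor on $S^2 TM$ to cancel degeneracies, or invoking a rigidity argument for the lifted $G$-action on $\mathbb{P}TM$. Once $\omega$ is an invariant $C^k$ Riemannian metric, each $\alpha(g)$ is a $C^k$-isometry of $(M,\omega)$; by compactness of $M$, $\omega$ is uniformly equivalent to any smooth reference metric, which gives the uniform bound $\sup_g \sup_x \|D_x\alpha(g)\| < \infty$.
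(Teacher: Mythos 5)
Your setup (Sobolev sections of $S^2T^*M$, small-exponential-growth bound via the chain rule, applying the Kazhdan projection $p=\lim p_n$ with $p_n\in C_c(G)_+$ and $\int p_n\to 1$, Sobolev embedding for $C^k$ regularity) is exactly the route sketched in the paper and carried out in Brown--Fisher--Hurtado, and you correctly isolated the delicate point. But that point is precisely where your proposal has a genuine gap: you only obtain a positive \emph{semi}-definite invariant tensor, and none of the three workarounds you list is the right mechanism. Averaging the inverse tensor on $S^2TM$ is not a linear operation on the Hilbert space, so the Kazhdan projection does not apply to it; iterating over a family of initial metrics runs into the same degeneration in the limit; and invariance of the degeneracy locus in $\mathbb{P}TM$ gives nothing for a general action. (A smaller gloss: the hypothesis only bounds first derivatives, while the $H^s$ operator norm of $\pi(g)$ involves derivatives of $\alpha(g)$ up to order $s$; one must first show, by a divide-and-conquer composition estimate, that subexponential growth of first derivatives forces subexponential growth of all $C^k$ norms.)

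The missing idea is quantitative, and it is hidden in the precise form of strong property (T): the Kazhdan projection is the limit of nonnegative measures $\mu_n$ supported in balls $B(C'n)$ with $\|\pi(\mu_n)-\pi(p)\|_{B(\cH)}\leq C\lambda^n$ for a fixed $\lambda<1$, uniformly over the class of representations with growth rate below the threshold. On the other hand, subexponential growth of derivatives gives the pointwise lower bound $\alpha(h)^*g_0\geq c_\varepsilon e^{-2\varepsilon\ell(h)}g_0$, hence $\pi(\mu_n)g_0\geq c_\varepsilon e^{-2\varepsilon C'n}\bigl(\int\mu_n\bigr)g_0$. Choosing $\varepsilon$ so small that $e^{-2\varepsilon C'}>\lambda$, the exponentially fast convergence $\|\omega-\pi(\mu_n)g_0\|_{C^0}\lesssim\lambda^n$ loses the race against the (arbitrarily slow) exponential degeneration of the lower bound, so for some large $n$ one gets $\omega\geq \tfrac12 c_\varepsilon e^{-2\varepsilon C'n}g_0>0$ uniformly on $M$. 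This race between the rate of convergence of the averaging operators and the support growth of the $\mu_n$ is the content of the positivity step you left open; without it your argument only yields a possibly degenerate invariant form, and the final deduction of bounded derivatives (which is otherwise fine, by compactness and uniform comparability with $g_0$) does not get off the ground.
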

The idea is to use strong property (T) for the representation on the Hilbert space of signed metrics on $M$ with Sobolev norms $W^{n,2}$, which take into account the $L^2$-norms of all derivatives of order $\leq n$. Here $n$ is an arbitrary positive integer. Strong property (T) allows to construct such invariant signed metrics. The fact that the Kazhdan projection belongs to the closure of nonnegative functions is used to ensure that these signed metrics can be taken positive. The Sobolev embedding theorems say that, for $n$ large, these metrics become smoother.

\section{Banach space representations}\label{sec:BanachSpaces}
\subsection{Banach spaces versions of property (T)}
The last two decades have seen important developpments in the study of group actions on Banach spaces, initiated by a number of more or less simultaneous investigations \cite{MR1914617,MR2221161,MR2316269,MR2423763,MR2671183,MR2421319}.

The work \cite{MR2316269} (and also \cite{MR2671183,MR2423763}) have proposed to study different possible generalizations of property (T) with Hilbert spaces replaced by Banach spaces. If one adopts the definition in terms of almost invariant vectors, one gets property (T$_X$). Let $\cE$ be a class of Banach spaces.
\begin{definition}(Bader, Furman, Gelander, Monod \cite{MR2316269}) A locally compact group $G$ has property (T$_\cE$) if for every isometric representation $\pi \colon G \to O(X)$ on a space $X$ in $\cE$, 
the quotient representation $G \to O(X/X^{\pi(G)})$ does not almost have invariant vectors. Here, $X^{\pi(G)}$ denotes the closed subspace of $X$ consisting of vectors that are fixed by $\pi$.
\end{definition}
When the quotient representation $G \to O(X/X^{\pi(G)})$ does not almost have invariant vectors, we say that $\pi$ has spectral gap. 

Compact groups have T$_X$ with respect to all Banach spaces, but for other locally compact groups we have to impose conditions on a Banach space to hope to have T$_X$. For example, the action by left-translation of $C_0(G)$ has spectral gap if and only $G$ is compact.

Adapting the equivalent characterization of property (T) in cohomological terms, we obtain:
\begin{definition}(\cite{MR2316269}) A locally compact group $G$ has property F$_\cE$ if every action of $G$ by affine isometries on a space in $\cE$ has a fixed point.
\end{definition}

It still holds that for $\sigma$-compact groups, F$_\cE$ implies (T$_\cE$), but the converse is not true. For example Pansu's computation of $L_p$-cohomology of rank $1$ symmetric spaces \cite{MR1086210} says that $Sp(n,1)$ does not have F$_{L_p}$ for $p>4n+2$, whereas as every group with property (T) \cite{MR2316269}, it has T$_{L_p}$ for every $1\leq p<\infty$. Pansu's result has been generalized by Yu \cite{MR2221161} who showed that every Gromov-hyperbolic group has a proper isometric action on an $L_p$ space for every large $p$. We refer to \cite{MR4275861,MR3590529,AminedlS,czuronKalantar} and \cite{MR3872847,MR4229199,OppenheimGarland} for recent progresses on fixed points properties for actions on $L_p$ spaces and other Banach spaces. So studying for which spaces a group has F$_X$ is a way to quantify the strength of property (T). The following conjecture therefore is another indication that $\SL_3(\Z)$ has a very strong form of property (T).
\begin{conjecture}\label{conj:BFGM}\cite{MR2316269}
  Any action by isometries of $SL_3(\Z)$ (or more generally a
  lattice in a connected simple Lie group of real rank $\geq 2$)
  on a uniformly convex Banach space has a fixed point.
\end{conjecture}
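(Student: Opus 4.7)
The plan is to import the rank $0$ reduction program of Section~\ref{sec:SL3} and the strong property (T) scheme of Section~\ref{sec:strongT} into the uniformly convex Banach setting, and to deduce Conjecture~\ref{conj:BFGM} from a Banach version of strong property (T). Concretely, one would first prove the following analogue of Theorem~\ref{thm:strongTSL3Z}: for every isometric representation $\pi$ of $G=\SL_3(\R)$ on a uniformly convex Banach space $X$, the averages $\pi(KgK)$ converge in norm to a projection onto $X^{\pi(G)}$ as $g$ tends to infinity in the Weyl chamber. Applied to the block upper-triangular linearization on $X\oplus\CC$ of an affine isometric action of $G$ with linear part $\pi$ and cocycle $b\in Z^1(G,\pi)$ (whose operators have at most polynomial growth in any length function, hence arbitrarily small exponential growth), this convergence forces $b\in B^1(G,\pi)$ and hence the existence of a fixed point. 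Passage to $\Gamma=\SL_3(\Z)$ would proceed via the two-step representation induction from Subsection~\ref{subsec:induction} and the proof of Theorem~\ref{thm:strongTSL3Z}, applied to the uniformly convex spaces $L_p(G/\Gamma;X)$ (which are uniformly convex whenever $X$ is, for $1<p<\infty$).

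The three-step strategy would then be run for isometric representations on uniformly convex $X$. Step~1 is to establish a Banach analogue of Proposition~\ref{prop:Uinvariant_K_coeff}: for every isometric representation $\pi$ of $K=\SO(3)$ on a uniformly convex space $X$ and every $\pi(U)$-invariant vectors $\xi\in X$ and $\eta\in X^*$,
\[ \bigl|\langle \pi(k_\delta)\xi,\eta\rangle-\langle \pi(k_0)\xi,\eta\rangle\bigr|\leq C_X\,\|\xi\|\,\|\eta\|\,|\delta|^{\beta(X)} \]
for some exponent $\beta(X)>0$ and constant $C_X$ depending only on the modulus of convexity of $X$. The natural attack is via the Peter--Weyl decomposition of $X$ into $K$-isotypic components: on the degree-$n$ component, the $U$-biinvariance and the one-dimensionality of $U$-invariants in the spherical representations force the matrix coefficient to be a scalar multiple of the Legendre polynomial $P_n(\delta)$, with a scalar controlled by a multiplicity pairing in $X$. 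Bernstein's inequality would then give the Hölder estimate, provided one can control the isotypic projections quantitatively in the Banach geometry of $X$.

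Steps~2 and~3 should then go through with only minor adjustments. Step~2 (the zig-zag exploration of the Weyl chamber) is purely geometric and carries over \emph{verbatim} from Proposition~\ref{prop:Kinvariant_coeff_Cauchy} once Step~1 is in hand, yielding the norm convergence of $\pi(KgK)$ to a projection $P$ on the space of harmonic vectors. Step~3 (\emph{harmonic implies invariant}) uses uniform convexity as a robust replacement for strict convexity of Hilbert spaces: a harmonic vector $\xi$ equals the barycenter $\iint \pi(kgk')\xi\,dk\,dk'$ of vectors of the same norm $\|\xi\|$ (since $\pi$ is isometric), and the strict convexity that comes with uniform convexity forces all of them to coincide with $\xi$.

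In my view, the central difficulty is Step~1. In the Hilbert case, Proposition~\ref{prop:Uinvariant_K_coeff} benefits from the \emph{orthogonal} Peter--Weyl decomposition and the exact identity with the $P_n$, which together reduce everything to the uniform Bernstein bound. In a general uniformly convex Banach space, the isotypic projections still exist but are not uniformly bounded in $n$, and the reduction on each isotypic component to a single scalar function of $\delta$ demands delicate quantitative input from the modulus of convexity of $X$, perhaps in the form of Pisier-type $p$-convexity or a suitable renorming/ultraproduct argument. Producing a Hölder exponent $\beta(X)>0$ valid for the entire class of uniformly convex spaces seems out of reach with current techniques, and even the case of $L_p$ with $p$ close to $1$ or $\infty$ appears highly nontrivial; once this analytic input is secured, however, the remainder of the argument is a robust adaptation of the material already developed in Sections~\ref{sec:SL3} and~\ref{sec:strongT}.
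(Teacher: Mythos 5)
The statement you were asked about is a \emph{conjecture} (due to Bader--Furman--Gelander--Monod), and the paper does not prove it; it only explains why it is expected to hold and what would settle it. Your proposal is, in essence, the very strategy the paper itself outlines in Sections~\ref{sec:strongT} and~\ref{sec:BanachSpaces}: prove strong property (T) of $\SL_3(\R)$ (and then $\SL_3(\Z)$ via two-step induction) with respect to a class of Banach spaces, which by the analogue of Proposition~\ref{prop:H1} kills $H^1$ for the linearized affine action and hence produces a fixed point. The paper records exactly this reduction: the whole scheme goes through for any $X$ satisfying the estimate \eqref{eq:norm_TdeltaonX} on $\|T_\delta-T_0\|_{B(L_2(S^2;X))}$, and ``this would settle Conjecture~\ref{conj:BFGM}''. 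So nothing in your Steps~2 and~3, nor in the reduction from fixed points to a Banach strong (T) statement, is new relative to the paper.

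The genuine gap is precisely the one you concede at the end: Step~1, i.e.\ a H\"older estimate with some exponent $\beta(X)>0$ for $U$-biinvariant coefficients of isometric $\SO(3)$-representations on an arbitrary uniformly convex $X$ (equivalently \eqref{eq:norm_TdeltaonX} for all uniformly convex $X$), is not known, and no argument you sketch produces it. The Peter--Weyl route you suggest fails in general Banach spaces for the reason you yourself identify: the isotypic projections are not uniformly bounded, and there is no orthogonality to absorb them; the known results only cover $\theta$-Hilbertian spaces, subspaces of $L_p$, and spaces with $d_n(X)=O(n^{1/4-\varepsilon})$ (and, over non-archimedean fields, spaces of nontrivial type by a completely different mechanism using large nilpotent subgroups of $\SL_3(\Z_p)$, which has no analogue in $\SO(3)$). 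A further caveat: even granting Step~1, the passage to the non-cocompact lattice $\SL_3(\Z)$ is not the ``verbatim'' induction of Subsection~\ref{subsec:induction}; it is the delicate two-step representation argument of \cite{dlSActa} relying on Lubotzky--Mozes--Raghunathan, although for \emph{isometric} actions one could instead invoke integrability of the lattice. In short, your text is a correct account of the known reduction together with an honest acknowledgement that the key analytic input is missing; it is not a proof, and the missing input is exactly why the statement remains a conjecture.
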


\subsection{Expander graphs}
The study of group actions on Banach spaces is also related to questions about the possible interactions between geometry of finite graphs and of Banach spaces. Given a finite graph $\cG$ and a Banach space $X$, the $X$-valued Poincar\'e constant $\rho(\cG,X)$ is the smallest constant $\rho$ such that, for every function $f$ from the vertex set of $\cG$ to $X$, the Poincar\'e inequality
\[ \inf_{\xi \in X} \|f-\xi\|_2 \leq \rho \|\nabla f\|_2\]
holds, where $\nabla f$ is the function on the egde set of $\cG$ taking the value $f(x) - f(y)$ at the edge $(y,x)$. A sequence of bounded degree graphs with size going to infinity is said to be expanding with respect to $X$ if $\inf_n \rho(\cG_n,X)>0$. When $X$ is the line, or a Hilbert space, or an $L_p$ space for $1 \leq p <\infty$, we recover the usual notion of expander graph. On the opposite, there are no expanders with respect to $\ell_\infty$, and more generally with respect to a space containing arbitrarily large copies of $\ell_\infty^n$  (such space are called \emph{spaces with trivial cotype}). A sequence $\cG_n$ that is expanding with respect to every uniformly convex Banach space is called a sequence of \emph{super-expanders} \cite{MR3210176}. There are two sources of examples known : one, that we will discuss in the last section, coming from quotients of arithmetic groups over non-archimedean local fields \cite{MR2574023} and relying on the ideas from Section~\ref{sec:SL3}, and one coming from Zig-Zag products \cite{MR3210176}.

When a sequence of graphs $\cG_n$ are Cayley graphs of $(\Gamma_n,S_n)$ where $\Gamma_n$ is a sequence finite quotients of a group $\Gamma$ with size going to infinity and $S_n$ the image of a fixed generating set of $\Gamma$ in $\Gamma_n$, the fact that $\cG_n$ are expanders with respect to $X$ is equivalent to the fact that the representation $\pi$ on $\ell_2(\cup_n \Gamma_n;X)$ has spectral gap. This if for example the case if $\Gamma$ has (T$_{\ell_2(\N;X)}$). It follows from this discussion is that Conjecture~\ref{conj:BFGM} is stronger than the following conjecture:
\begin{conjecture}\label{conj:SL3superexpanders}
  The sequence of Cayley graphs of $\SL_3(\Z/n\Z)$ with respect to the elementary matrices is a sequence of super-expanders.
\end{conjecture}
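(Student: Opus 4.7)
The plan is to imitate the three-step rank-$0$ reduction of Section~\ref{sec:SL3}, upgraded from Hilbert spaces to uniformly convex Banach spaces. The first reduction is to unwind the definition of super-expander: since the Cayley graphs of $\SL_3(\Z/n\Z)$ are quotients of the Cayley graph of $\SL_3(\Z)$, a standard transference argument shows that Conjecture~\ref{conj:SL3superexpanders} would follow from the statement that, for every uniformly convex Banach space $X$, the isometric representation $\pi_X$ of $\SL_3(\Z)$ on $E_X := \ell_2\!\left(\bigsqcup_n \SL_3(\Z/n\Z);X\right)$ has spectral gap on the complement of the constants, with a gap depending only on the modulus of convexity of $X$. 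I would therefore aim at property $(\mathrm{T}_{E_X})$ uniformly in $X$.

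The bulk of the work lies in Step~1, which is where a genuinely new input is required. What is needed is a vector-valued analog of~\eqref{eq:norm_of_Tdelta}, namely a bound
\[ \|T_\delta\otimes \mathrm{id}_Y - T_0 \otimes \mathrm{id}_Y\|_{B(L_2(S^2;Y))} \leq C(Y)\,|\delta|^{\alpha(Y)}\]
for every uniformly convex $Y$, with some Hölder exponent $\alpha(Y)>0$. My approach would be: (i) Pisier's theorem gives that $Y$ is K-convex, so the spectral projections $Q_n$ onto the degree-$n$ spherical harmonics extend boundedly to $L_2(S^2;Y)$ with polynomial growth in $n$; (ii) one then writes $T_\delta - T_0 = \sum_n (P_n(\delta)-P_n(0))\,Q_n$ and combines the scalar Legendre-polynomial estimates used in the proof of Proposition~\ref{prop:Uinvariant_K_coeff} with a Littlewood--Paley/interpolation argument, interpolating between the uniform bound $2$ for $\|T_\delta-T_0\|$ and the stronger bound obtained by restriction to finitely many degrees. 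The output is Hölder continuity of $U$-biinvariant coefficients of representations of $\SO(3)$ on $Y$, with an exponent controlled by the type/cotype constants of $X$.

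Once Step~1 is secured, Step~2 (the zig-zag) is essentially formal: the geometric estimates \eqref{eq:horizontal_estimates} and \eqref{eq:vertical_estimates} only use the subadditivity of the length function and the specific embeddings of $\SO(2)\backslash \SO(3)/\SO(2)$ into the Weyl chamber, so they transfer to the Banach setting as long as the exponential growth $\alpha(\pi)$ is less than $\alpha(Y)/2$; for isometric representations this is automatic. Step~3 (``harmonic implies invariant'') uses uniform convexity of $Y$ in place of the strict convexity of Hilbert space: the averaging identity $\xi = \iint \pi(kgk')\xi\,dk\,dk'$ combined with $\|\pi(kgk')\xi\| = \|\xi\|$ forces $\pi(g)\xi = \xi$. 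Finally, one passes from $\SL_3(\R)$ to $\SL_3(\Z)$ via the two-step representation formalism of Theorem~\ref{thm:strongTSL3Z}, using the Lubotzky--Mozes--Raghunathan integrability of $\SL_3(\Z)$ in $\SL_3(\R)$ to induce the isometric representation $\pi_X$ to a representation with mild exponential growth on an $L_p$-type space built from $E_X$, to which Steps~1--3 applied to $\SL_3(\R)$ can be fed.

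The main obstacle, and the reason the conjecture is still open, is Step~1 for general uniformly convex $Y$. The Hilbert argument is sharp and rests crucially on Plancherel orthogonality: the Peter--Weyl decomposition is an \emph{orthogonal} decomposition and $\sup_n |P_n(\delta)-P_n(0)|$ controls the operator norm. In a Banach space the Peter--Weyl projections $Q_n$ are only uniformly bounded after K-convexity is invoked, and the combinatorics of summing spectral contributions is more delicate; indeed one expects the Hölder exponent $\alpha(Y)$ to degenerate as the convexity modulus of $Y$ deteriorates, and making this degeneracy quantitative enough to survive the zig-zag and the induction step is the crux. A plausible route is to first treat the subclass of Banach spaces that are subquotients of non-commutative $L_p$ for $1<p<\infty$, using the estimates of \eqref{eq:Spestimate_Tdeltaupper} and their generalizations from \cite{ParcetRicarddlS}, and then extend to general uniformly convex spaces by a complex interpolation argument.
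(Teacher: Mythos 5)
You have not proved the statement, and indeed no proof exists: in the paper this is an open conjecture, recorded precisely because the key analytic input is missing. The paper only observes that if the estimate \eqref{eq:norm_TdeltaonX} held for every uniformly convex space (or every space of nontrivial type), then Conjectures~\ref{conj:BFGM} and \ref{conj:SL3superexpanders} would follow; your Steps 2 and 3 and the reduction from the Cayley graphs of $\SL_3(\Z/n\Z)$ to spectral gap for the representation on $\ell_2\bigl(\bigsqcup_n \SL_3(\Z/n\Z);X\bigr)$ are exactly this known machinery. The entire content of the conjecture is your Step 1, and your sketch of it does not work. K-convexity controls the Rademacher projection, not the spherical-harmonic projections $Q_n$; for \emph{any} Banach space $Y$ one already has $\|Q_n\otimes \mathrm{id}_Y\|\lesssim n^{1/2}$ simply because the zonal kernel $(2n+1)P_n$ has $L_1$-norm of that order, and this is useless, since $\sum_n |P_n(\delta)-P_n(0)|\,n^{1/2}$ diverges. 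The Hilbert-space proof of Proposition~\ref{prop:Uinvariant_K_coeff} (equivalently \eqref{eq:norm_of_Tdelta}) rests on Plancherel orthogonality, which is exactly what has no substitute in a general uniformly convex space; your proposed ``interpolation between the uniform bound $2$ and the bound on finitely many degrees'' has no second endpoint that beats the trivial kernel estimate, so it produces no H\"older exponent $\alpha(Y)>0$.

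Your fallback, complex interpolation starting from subquotients of (non-commutative) $L_p$, also cannot reach the full class: interpolation with a Hilbert space only yields the $\theta$-Hilbertian spaces (and, via \eqref{eq:Spestimate_Tdeltaupper}, the spaces with $d_n(X)=O(n^{1/4-\varepsilon})$, as in \cite{dlSIsrael15}), and a general uniformly convex space is not known to be of this form; closing that gap is precisely the open problem, not a routine extension. Your final paragraph concedes this, so the proposal should be read as a correct summary of the known strategy together with an accurate identification of the obstruction, but it is not a proof of the conjecture, and no step you propose removes the obstruction.
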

It is conceivable that they are even expanders with respect to every Banach space of nontrivial cotype. The existence of such expanders is still unknown. On the opposite, it is also unknown whether there exist expanders that are not expanders with respect to all Banach spaces of nontrivial cotype. This is revealing of how little such questions are understood.

\subsection{Strong Banach property (T)}
Let $\cE$ be a class of Banach spaces. If, in Definition~\ref{def:strongT}, we allow representations on a Banach space in $\cE$ rather than only on a Hilbert space, we say that $G$ as strong property (T) with respect to $\cE$. So there are as many Banach space versions of strong property (T) that classes of Banach spaces. In \cite{MR2574023}, Lafforgue uses the terminology strong Banach property (T) to mean that $G$ has strong property (T) with respect to every class $\cE$ in which $\ell_1$ is not finitely representable. This is essentially the largest possible class, because no non compact group can have strong property (T) with respect to $L_1(G)$.

Strong property (T) with respect to Banach spaces has the same kind of applications than for Hilbert spaces. First as in Proposition \ref{prop:H1}, strong property (T) with respect to $X\oplus \CC$ implies that $H^1(G,\pi)=0$ for every representation on $X$ with small enough exponential growth. We also have a variant of Theorem~\ref{thm:bfh}. If $G$ is assumed to have strong property (T) with respect to all subspaces of $L_p$ spaces for all $2\leq p<\infty$, then it is enough to assume that the action is by $C^{1+s}$-diffeomorphisms to ensure that for every $\varepsilon>0$ $\alpha$ preserves of metric of regularity $C^{s-\varepsilon}$ \cite{brownFisherHurtado}, and enough to assume that the action is by $C^1$-diffeomorphisms to ensure that for every $p<\infty$ $\alpha$ preserves a measurable metric that is $L^p$-integrable \cite{brownDamjanovicZhang}.

The sketch of proof of strong property (T) for $\SL_3(\R)$ and $\SL_3(\Z)$ is Section~\ref{sec:strongT} applies in the same way, provided that there are constants $C,\theta>0$ such that
\begin{equation}\label{eq:norm_TdeltaonX} \forall \delta \in [-1,1], \|T_\delta - T_0\|_{B(L_2(S^2;X))} \leq C |\delta|^{\theta/2}.
\end{equation}
\begin{theorem}\cite{MR2423763,dlSActa} $\SL_3(\R)$ and $\SL_3(\Z)$ have strong property (T) with respect to $X$ for every Banach space satisfying \eqref{eq:norm_TdeltaonX} for some $C,\theta>0$. 
\end{theorem}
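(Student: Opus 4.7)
The plan is to re-run the three-step proof of Theorem~\ref{thm:strongTSL3R} (and the induction procedure for Theorem~\ref{thm:strongTSL3Z}), replacing the Hilbert-space analytic input \eqref{eq:norm_of_Tdelta}/Proposition~\ref{prop:Uinvariant_K_coeff} by the Banach-space assumption \eqref{eq:norm_TdeltaonX}. First, I would establish the $X$-valued analogue of Proposition~\ref{prop:Uinvariant_K_coeff}: for a representation $\pi$ of $K=\SO(3)$ on an appropriate Banach space $Y$ (arising as a subspace of $L_2(K;X)$, or more generally any space for which the averaging operators $T_\delta$ on $L_2(S^2;X)$ control $U$-biinvariant coefficients through the identification $K/U\simeq S^2$), the $U$-biinvariant matrix coefficients $\delta\mapsto\langle\pi(k_\delta)\xi,\eta\rangle$ are $\theta/2$-Hölder on $(-1,1)$, with constant proportional to $C\|\xi\|\|\eta\|$. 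This is essentially a reformulation of \eqref{eq:norm_TdeltaonX} using the Peter--Weyl-type decomposition for $K$.

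Next, I would repeat verbatim the geometric zig-zag argument of Proposition~\ref{prop:Kinvariant_coeff_Cauchy}. The trick of conjugating by $D(-t,\tfrac{t}{2},\tfrac{t}{2})$, which commutes with $U$, turns $K$-invariant vectors into $U$-invariant ones at the cost of a factor $e^{O(\alpha(\pi)|t|)}$ coming from the exponential growth of $\pi$. The resulting horizontal and vertical estimates then take the form $|c(r,s,t)-c(\cdot,\cdot,\cdot)|\leq C'\,e^{(2\alpha(\pi)-\theta/2)\cdot(\text{distance to origin})}$. Provided $\alpha(\pi)$ is bounded above by an explicit multiple of $\theta$, the exponent is negative; chaining the estimates along the path of Figure~\ref{figure:zigzag} shows that $\pi(KgK)$ is Cauchy in $B(Y)$ and converges in norm to a projection $P$ onto the space of harmonic vectors.

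The main obstacle is the third step, where the Hilbert-space proof invoked strict convexity which is no longer available. I would follow Lafforgue's strategy in \cite{MR2423763} of working directly inside the Banach algebra $\cC_{s\ell+C}(G)$: the uniform Cauchy estimates from Steps~1--2 promote a suitable sequence of $C_c(G)$-elements of ``$KgK$'' type to a Cauchy sequence in $\cC_{s\ell+C}(G)$, whose limit $p$ is a candidate Kazhdan projection. Idempotency of $p$ is established by a convolution identity $p\ast p=p$ obtained by letting the two defining limits go to infinity independently, and the invariance property $\pi(p)Y\subseteq Y^{\pi(G)}$ is checked using that $p\ast f=p$ for any probability measure $f$ with support in a fixed compact generating set. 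This bypasses the need for any convexity assumption on $Y$.

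Finally, for $\SL_3(\Z)$ I would apply the induction machinery of Section~\ref{sec:strongT}: a representation of $\SL_3(\Z)$ on $Y$ with small enough exponential growth induces, via the Lubotzky--Mozes--Raghunathan estimates for a well-chosen probability measure $\mu$ on $\SL_3(\R)$, a two-step representation of $\SL_3(\R)$ with $X_0=L_\infty(G,\mu;Y)^\Gamma$, $X_1=L_2(G,\mu;Y)^\Gamma$, $X_2=L_1(G,\mu;Y)^\Gamma$. Since \eqref{eq:norm_TdeltaonX} transfers from $X$ to $L_2(G,\mu;X)$ by the isometry $L_2(S^2;L_2(G,\mu;X))\simeq L_2(G,\mu;L_2(S^2;X))$, Lemma~\ref{lem:TwoStepRepsOfCompactGroups} and its two-step $X$-valued analogue reduce the compact-group analysis back to Step~1. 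Running the two-step versions of Steps~1--3 on this induced object produces a Kazhdan projection that descends to the original representation of $\SL_3(\Z)$, yielding strong property (T) with respect to $X$.
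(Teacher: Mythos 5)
Your Steps 1, 2 and the induction for $\SL_3(\Z)$ do follow the paper's route: the theorem is proved exactly by re-running Sections 1 and 3 with \eqref{eq:norm_TdeltaonX} as the compact-group input, and your observation that \eqref{eq:norm_TdeltaonX} passes from $X$ to $L_2(G,\mu;X)$ by Fubini is precisely what makes the two-step induction via Lubotzky--Mozes--Raghunathan work, as in \cite{dlSActa}. (One small correction of mechanism: for Banach representations there is no Peter--Weyl decomposition to invoke; after renorming so that the restriction to $K$ is isometric, one uses the equivariant embedding $\xi\mapsto(k\mapsto\pi(k)\xi)$ of $X$ into $L_2(K;X)$ to identify the averaged operators $\pi(e_U\,\delta_{k_\delta}\,e_U)$ with compressions of $T_\delta$ acting on $L_2(S^2;X)$; your hedged formulation is in this spirit.)

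The genuine gap is in your Step 3. Write $e_{KgK}$ for the $K$-bi-invariant probability measure on $KgK$, so that $p=\lim_g e_{KgK}$ in $\cC_{s\ell+C}(G)$. The Cauchy estimates of Steps 1--2 only control $K$-bi-invariant averages, so the convolution identities they yield always retain an outer $K$-average: for every $h$ one gets $e_K*\delta_h*p=\lim_g\int_K e_{KhkgK}\,dk=p$ and likewise $p*\delta_h*e_K=p$ (and $p*p=p$, so idempotency is indeed fine), but \emph{not} $\delta_h*p=p$, which is what the invariance $\pi(h)\pi(p)=\pi(p)$, i.e. $\pi(p)X\subseteq X^{\pi(G)}$, requires. (Your identity $p*f=p$ is moreover on the wrong side: it would only give $\pi(p)\pi(f)=\pi(p)$.) In the unitary case the outer $K$-average is removed by strict convexity; on a general Banach space satisfying \eqref{eq:norm_TdeltaonX} even ``harmonic implies $K$-invariant'' is not automatic, and this ``harmonic implies invariant'' step is exactly the part the survey flags as the most involved, requiring the genuinely new arguments of \cite{MR2423763} (see also the presentation in \cite{dlSIsrael15}); it cannot be obtained from the convolution manipulations you describe, so the claim that they ``bypass the need for any convexity assumption'' asserts the conclusion rather than proving it. Modulo supplying that step (by citing or reproducing Lafforgue's argument), the rest of your outline matches the paper's proof.
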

It is expected that all uniformly convex Banach spaces spaces satisfy \eqref{eq:norm_TdeltaonX} for some $C,\theta$. More precisely, it should be true that spaces satisfying \eqref{eq:norm_TdeltaonX} are exactly the spaces of nontrivial Rademacher type. This would settle Conjecture \ref{conj:BFGM} and \ref{conj:SL3superexpanders}.

If follows from the Riesz-Thorin theorem that \eqref{eq:norm_TdeltaonX} holds for $L_p$ spaces for $1<p<\infty$ and therefore for every subspace of an $L_p$ space, and more generally for every $\theta$-Hilbertian space. In \cite{dlSIsrael15}, exploiting the stronger summability property from \eqref{eq:Spestimate_Tdeltaupper}, I showed that \eqref{eq:norm_TdeltaonX} holds whenever $d_n(X) = O(n^{\frac 1 4 - \varepsilon})$ as $n \to \infty$. Here $d_n(X)$ denotes the supremum over all subspaces $E$ of $X$ of dimension $n$ of the Banach-Mazur distance to the Euclidean space of the same dimension. For example, this holds if $X$ has type $p$ and cotype $q$ with $\frac 1 p - \frac 1 q<\frac 1 4$.

\section{Other groups}\label{sec:otherGroups}
There is no general theory yet in which the strategy from Section~\ref{sec:SL3} for $\SL_3(\R)$ fits, but there are other examples of groups for which such tools have been developped~: $\SL_3(\F)$ for a non-archimedean local field \cite{MR2423763}, $\SP_2(\R)$ and its universal cover \cite{MR3047470,MR3035056,MR3453357,strongTsp4}, $\SP_2(\F)$ \cite{liao,liaoAP}, $\SL_n(\F)$ and $\SL_n(\R)$ for $n \geq 4$ \cite{LaffdlS} and \cite{deLaatdlSCrelle,dlMdlSAIF}, $\SO(n,1)$ \cite{ParcetRicarddlS} and finally lattices in locally finite affine buildings of type $\tilde{A}_2$ \cite{LecureuxWitzeldlS}. 

Let me expand a bit, starting with the real Lie groups. The group $\SP_2(\R)$ is the group of $4\times 4$ matrices which preserve the standard symplectic form $\omega(x,y) = y_1 x_3 + y_2 x_4 -  x_1 y_3 - x_2 y_4$. The Bernstein inequalities used in Proposition~\ref{prop:Uinvariant_K_coeff} were generalized in \cite{MR3165537} to Jacobi polynomials, which appear as spherical functions for other Gelfand pairs than $\SO(2) \subset \SO(3)$. In \cite{MR3047470,MR3453357}, Haagerup and de Laat then generalized Theorem~\ref{thm:nonAPSL3} to $\SP_2(\R)$ and its universal cover respectively. The analogue of Theorem~\ref{thm:nonAPLpSL3} was obtained in \cite{MR3035056}, but for $p>12$ (improved to $p>10$ in \cite{strongTsp4} by refining the Bernstein inequalities from \cite{MR3165537}), and strong property (T) was proved in \cite{strongTsp4} for the Lie groups or their cocompact lattices, and in \cite{dlSActa} for their non cocompact lattices, for a class of Banach spaces that is more restritive than for $\SL_3(\R)$. By rank $2$ reduction all these results extend to all real connected semisimple Lie groups all of whose simple factors have rank $\geq 2$, and all their lattices.

When $\F$ is a non-archimedean local field, in the same way, to obtain
all almost $\F$-simple algebraic group of split rank at least two, it
was enough to consider $\SL_3$ and $\SP_2$. Lafforgue's original
article \cite{MR2423763} already contained a proof of strong property
(T) for $\SL_3(\F)$. Steps 2 and 3 are almost identical to the real
case, but the first step is very different, because maximal compact
subgroups of $\SL_3(\F)$ are very different from those in
$\SL_3(\R)$. For example when $\F=\Q_p$, a maximal compact subgroup is
$\SL_3(\Z_p)$, which contains large nilpotent groups (the groups of
upper-triangular matrices). This difference turns out to play a rôle
for Banach space versions of strong property (T). Indeed, exploiting
these large nilpotent groups and the good understanding of abelian
Fourier analysis on vector-valued $L_p$ spaces \cite{MR675400},
Lafforgue \cite{MR2574023} was able to make the first step work for
representations of the maximal compact subgroups of $\SL_3(\F)$ on
arbitrary Banach space of nontrivial type. For $\SP_2$, the same was
proved by Liao in \cite{liao}. As a consequence, all almost
$\F$-simple algebraic group of split rank at least two and their
lattices have strong property (T) with respect to Banach spaces of
nontrivial type, and Conjectures~\ref{conj:BFGM} and
\ref{conj:SL3superexpanders} hold for $\SL_3(\Z)$ replaced by such
lattices.

The Fourier analysis and approximation results from
Section~\ref{sec:fourier} have also been obtained for non-archimedean
local fields, in \cite{LaffdlS} for $\SL_3$ and \cite{liaoAP} for
$\SP_2$. The results are identical for $\SL_3(\F)$ and
$\SL_3(\R)$. Interestingly, for $\SP_2$ a difference appears : the
condition $p>10$ becomes, better, $p>4$ in the non-archimedean case
\cite{liaoAP}.

The fact that, both in the real and non-archimedean case, the proofs do not allow to take $p$ down to $2$ in Theorem~\ref{thm:nonAPLpSL3} has been a motivation for finding other groups for which the restriction on $p$ is smaller. And indeed, this is the case for $\SL_n(\F)$ \cite{LaffdlS} and $\SL_n(\R)$ \cite{deLaatdlSCrelle} for large $n$. Let us focus on $\SL_n(\R)$, because the situation is closer to Section~\ref{sec:SL3}. In that case, a satisfactory replacement for Step 1 is to work with the pair $\SO(d) \subset \SO(d+1)$. In the sphere picture we are studying the operators $T_\delta$ defined as for $S^2$ but in higher dimension $S^d = \SO(d+1)/\SO(d)$. In \cite{deLaatdlSCrelle}, we proved that the map $\delta \in (-1,1) \mapsto T_\delta \in S_p$ is H\"older continous for every $p > 2 + \frac{2}{d-1}$. Step 2 is more delicate. Taking $n \geq d+1$ and seeing $\SO(d+1) \subset \SL_n(\R)$, by considering all possible matrices $D,D' \in \SL_n(\R)$, the maps $k\in \SO(d+1)\mapsto DkD' \in \SL_n(\R)$ pass to maps from the segment $[-1,1] = \SO(d)\backslash \SO(d+1) /\SO(d)$ into the Weyl chamber $\Lambda_n:=\SO(n)\backslash \SL_n(\R) /\SO(n)$. The problem is that all these maps take values in a fixed $d-2$-codimensional subset of $\Lambda_n$, so it is hopeless to efficiently connect any two points in the Weyl chamber by such moves as in Figure~\ref{figure:zigzag}. Even worse, when $n<2d-1$, the unions of these segments has only bounded connected components. Fortunately, when $n \geq 2d-1$ these connected components merge to form an unbounded component, and a weaker form of efficient exploration of this unbounded component is possible.  Putting everything together, we obtain that $\SL_{2d-1}(\Z)$ satisfies Theorem~\ref{thm:nonAPLpSL3} for every $p > 2 + \frac{2}{d-1}$. Equivalently, the non-commutative $L_p$ space of the von Neumann algebra of $\SL_{2d-1}(\Z)$ does not have the CBAP for every $p > 2 + \frac{2}{d-1}$. If $d \geq 5$, by rank $2d-1$ reduction, we obtain that the same is true for every $\Gamma$ is a simple Lie group of rank at least $2d-1$. In particular, if $L_p(\LL \SL_3(\Z))$ had the CBAP for some $p>2$, this would distinguish the von Neumann algebras of $\SL_3(\Z)$ and $\SL_{2d-1}(\Z)$ for large $d$. Even more optimistically, if conversely $L_p(\LL \SL_{2d-1}(\Z))$ had the CBAP for $2 \leq p\leq \frac{2}{d-1}$, this would distinguish the von Neumann algebras of $\SL_n(\Z)$ for odd $n$.

In \cite{dlMdlSAIF} the exploration procedure of the Weyl chamber of $\SL_n(\R)$ was refined, and this allowed to prove strong property (T) for $\SL_n(\R)$ with respect to classes of Banach spaces that become larger with $n$~: if a Banach space $X$ satisfies that, for some $\beta<\frac 1 2$, $d_k(X) = O( k^{\beta})$ as  $k\to \infty$, then $X$ has strong property (T) with respect to $X$ for every $n \geq \frac{c}{\frac 1 2 - \beta}$. The property that $d_k(X) =o(k^{\frac 1 2})$ characterizes the Banach spaces with nontrivial type \cite{MR467255}, and it is an old problem whether they automaticall satisfy the stronger condition $d_k(X) =o(k^{\frac 1 2})$. This is for example the case if $X$ has type $2$.

So far, we have only talked about higher rank groups, and rank $0$ reduction was used to prove strong rigidity results. But rank $0$ reduction can also say something about rank $1$ groups, which are not rigid in the same way as higher rank group. The following is an example.
\begin{prop}\cite{ParcetRicarddlS} Every $\SO(n)$-biinvariant matrix coefficient of every representation of $\SO(n,1)$ on a Hilbert space is of class $C^{\frac{n}{2}-1-\varepsilon}$ outside of $\SO(n)$, for every $\varepsilon>0$.
\end{prop}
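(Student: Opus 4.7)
The plan is to adapt the two-step strategy of Section~\ref{sec:SL3} to the pair $U = \SO(n-1) \subset K = \SO(n)$ inside $G = \SO(n,1)$. Step~1 is a higher-dimensional analogue of Proposition~\ref{prop:Uinvariant_K_coeff}: because the zonal spherical functions on $S^{n-1}$ decay faster than on $S^2$, $U$-biinvariant matrix coefficients of unitary representations of $K$ gain extra regularity. Step~2 then transfers this regularity to $K$-biinvariant coefficients on $G$, exploiting the key algebraic fact that the Cartan subgroup $A = \{a_t\}$ of $G$ (hyperbolic boosts along a fixed axis) commutes with $U$.

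For Step~1, Peter--Weyl together with the fact that the $U$-fixed subspace $(V_m)^U$ of each degree-$m$ spherical harmonic representation $V_m$ of $K$ is one-dimensional (spanned by the zonal vector) implies that every $U$-biinvariant matrix coefficient of a unitary representation of $K$ has the form $c(\delta) = \sum_m b_m \Psi_m(\delta)$, where $\Psi_m = C_m^{(n-2)/2}/C_m^{(n-2)/2}(1)$ is the normalised Gegenbauer polynomial of parameter $\alpha = (n-2)/2$, and where $\sum_m |b_m| \leq 1$ by Cauchy--Schwarz applied to the pairings on multiplicity spaces. The Darboux--Szeg\H{o} asymptotic formula combined with the derivative identity $\tfrac{d^k}{dx^k} C_m^\alpha = 2^k (\alpha)_k C_{m-k}^{\alpha+k}$ yields, on every compact subset of $(-1,1)$, the uniform bound $|(\Psi_m)^{(k)}(\delta)| \leq C_k\, m^{k - (n-2)/2}$. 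Optimising $\min(2\|f\|_\infty, |\delta-\delta'|\,\|f'\|_\infty)$ with $f = (\Psi_m)^{(k)}$ over $m$ then gives
\[
\sup_m\bigl|(\Psi_m)^{(k)}(\delta) - (\Psi_m)^{(k)}(\delta')\bigr| \leq C_k |\delta - \delta'|^{(n-2)/2 - k}
\]
for every $k < (n-2)/2$, and combining with $\sum_m |b_m| \leq 1$ produces $c \in C^{k,\theta}_{\mathrm{loc}}((-1,1))$ whenever $k + \theta < n/2 - 1$.

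For Step~2, take $K$-invariant unit vectors $\xi,\eta$ in the representation and write $c(s) = \langle \pi(a_s)\xi,\eta\rangle$, where $s \geq 0$ parametrises $K\backslash G/K \simeq [0,\infty)$ by geodesic distance in the symmetric space $G/K$. Since $a_t$ commutes with $U$, both vectors $\pi(a_{t/2})\xi$ and $\pi(a_{-t/2})\eta$ are $U$-invariant, so Step~1 applied to $\pi|_K$ shows that
\[
h_t(\delta) := \langle \pi(k_\delta)\pi(a_{t/2})\xi, \pi(a_{-t/2})\eta\rangle = \langle \pi(a_{t/2} k_\delta a_{t/2})\xi,\eta\rangle
\]
belongs to $C^{n/2-1-\varepsilon}$ in $\delta$ for every $\varepsilon > 0$. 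A direct computation in the hyperboloid model gives $a_{t/2} k_\delta a_{t/2} \in K a_{d(t,\delta)} K$ with $\cosh d(t,\delta) = \cosh^2(t/2) + \delta\sinh^2(t/2)$, so $h_t(\delta) = c(d(t,\delta))$. For each $s_0 > 0$ and any $t > s_0$, the map $\delta \mapsto d(t,\delta)$ is a smooth diffeomorphism with non-vanishing derivative from a neighbourhood of the unique $\delta_0 \in (-1,1)$ with $d(t,\delta_0) = s_0$ onto a neighbourhood of $s_0$; by the chain rule the regularity of $h_t$ in $\delta$ transfers to $c$ in $s$ near $s_0$, and covering $(0,\infty)$ by such neighbourhoods yields the proposition.

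The main technical point, and the origin of the $\varepsilon$ loss, is that the change of variables $\delta \mapsto d(t,\delta)$ degenerates at $\delta = \pm 1$ (i.e.\ at $s = 0$ and $s = t$), which forces the auxiliary parameter $t$ to depend on the compact sub-interval of $(0,\infty)$ that one wishes to cover; the borderline case $k + \theta = n/2 - 1$ is moreover unattainable because the individual polynomials $(\Psi_m)^{(k)}$ with $k = (n-2)/2$ are uniformly bounded in $m$ but carry no uniform-in-$m$ modulus of continuity.
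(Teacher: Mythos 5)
Your overall strategy is exactly the one this survey points to for rank one (the pair $\SO(n-1)\subset\SO(n)$ playing the role of $\SO(2)\subset\SO(3)$, then transfer along the Cartan subgroup), and the paper itself gives no proof, only the citation; so I assess your argument on its own terms. Your Step~1 is correct: only the class-one (spherical harmonic) representations of the Gelfand pair $(\SO(n),\SO(n-1))$ contribute, the coefficient bound $\sum_m|b_m|\le 1$ follows from Cauchy--Schwarz as you say, the Szeg\H{o}-type bound $|\Psi_m^{(k)}|\lesssim m^{k-(n-2)/2}$ on compacta of $(-1,1)$ is right (and termwise differentiation is legitimate since $k<(n-2)/2$ makes the differentiated series uniformly summable), and your Step~2 geometry checks out: $U=\SO(n-1)$ centralizes $A$, and the hyperbolic law of cosines gives $\cosh d(t,\delta)=\cosh^2(t/2)+\delta\sinh^2(t/2)$, with non-degenerate change of variables away from $\delta=\pm1$.

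There is, however, a gap in scope. The proposition, with this paper's conventions, is about \emph{arbitrary} strongly continuous representations on a Hilbert space, not unitary ones (the remark after it, ``already for unitary representations,'' makes the distinction explicit). Your Step~2 uses $\pi(a_{-t/2})\eta$ as the second vector and identifies $\langle\pi(k_\delta)\pi(a_{t/2})\xi,\pi(a_{-t/2})\eta\rangle$ with $\langle\pi(a_{t/2}k_\delta a_{t/2})\xi,\eta\rangle$; this uses $\pi(a_{t/2})^{*}=\pi(a_{-t/2})$ and so only covers unitary $\pi$. The repair is short but must be said: by Lemma~\ref{lem:FellBanach} you may assume after a similarity (which does not change the class of functions arising as biinvariant coefficients) that $\pi|_{\SO(n)}$ is unitary; averaging over $K$ on both sides reduces to $\xi$ being $\pi(K)$-invariant and $\eta$ being $\pi(K)^{*}$-invariant; then replace $\pi(a_{-t/2})\eta$ by $\pi(a_{t/2})^{*}\eta$, which is still $\pi(U)$-invariant because $U$ commutes with $a_{t/2}$ and $\pi|_K$ is unitary, and Proposition~\ref{prop:Uinvariant_K_coeff}-type estimates apply with the constant $\|\pi(a_{t/2})\xi\|\,\|\pi(a_{t/2})^{*}\eta\|$, which is finite for each fixed $t$ and therefore harmless for the \emph{local} regularity class even though it may grow exponentially in $t$. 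Finally, your closing remark that the endpoint $k+\theta=\frac n2-1$ is unattainable is not accurate: your own optimization yields the exact H\"older exponent $\frac{n-2}{2}-k$ when $\frac{n-2}{2}\notin\Z$, which is precisely why the paper notes that $\varepsilon=0$ is allowed for odd $n$; the $\varepsilon$-loss is only forced for even $n$, where one obtains $C^{\frac n2-2,1}$ rather than $C^{\frac n2-1}$.
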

We do not know if the regularity $\frac{n}{2}-1$ is optimal, but the linear order of $n$ is, already for unitary representations. For odd $n$, $\varepsilon=0$ is allowed.





\bibliographystyle{plain}
\bibliography{biblio}








\end{document}